\newtheorem{thm}{Theorem}
\newtheorem{lem}[]{Lemma}
\newtheorem{cor}{Corollary}
\newtheorem{rem}[]{Remark}
\newcommand{\Rset}{\mathbb{R}}
\newcommand{\Cset}{\mathbb{C}}
\begin{document}

\title{On a  relation between classical and  free  infinitely divisible transforms}

\author{Zbigniew J.  Jurek  (University of Wroc\l aw)}

\date{June 28, 2017}

\maketitle
\begin{quote} \textbf{Abstract.} We study two ways  (levels) of finding free-probability  analogues of classical infinite divisible measures. More precisely,  we identify their  Voiculescu transforms.  For  free-selfdecomposable measures  we found the formula (a differential equation) for  their background driving transforms.
We illustrate our methods  on the  hyperbolic characteristic functions. As a by-product our approach potentially  may produce new formulas for definite integrals.

\emph{Mathematics Subject Classifications}(2010): Primary 60E07,
60H05, 60Z11; Secondary 44A05,  60B10.

\medskip

\medskip
\emph{Key words and phrases:}  infinite divisibility; free-infinite divisibilty; convolution semigroups; characteristic function; Voiculescu transform;
L\'evy-Khintchine formulas; L\'evy (spectral) measure;  Riemann zeta functions;  Euler function; digamma function.

\medskip
\medskip
\emph{Abbreviated title:  Classical and free-infinite divisibility}

\end{quote}
\maketitle

Addresses:

\medskip
\noindent Institute of Mathematics \\ University of Wroc\l aw  \\
Pl. Grunwaldzki 2/4
\\ 50-384 Wroc\l aw \\ Poland \\ www.math.uni.wroc.pl/$\sim$zjjurek ; \ \
e-mail: zjjurek@math.uni.wroc.pl

\newpage
There are many notions of infinite divisibility that exhibit some similarities and as well some differences. Here we study the classical infinite divisibility with respect to the convolution  $ \ast $ and the free-infinite divisibility for the box-plus  $\boxplus$ operation (Theorem 1). We  introduced free-probability analogues of the Laplace (double exponential) and the hyperbolic distributions (on the real line).  For the free-selfdecomposable Voiculescu transfroms we found an ordinary differential equation for their background driving transforms (Theorem 2).

The hyperbolic  distributions were  studied from the infinite divisibility point of view by Pitman and Yor (2003); cf. [18 ] . While here we have utilized the fact that all of them are in the proper subclass of selfdecomposable distributions (also called the class L distributions); Jurek (1996), cf.  [9]

\medskip
 The program of  the  study can be  viewed as a  particular case of  the following abstract set-up:  there are two abstract semigroups $ (\mathcal{S}_1, \circ)$ and $(\mathcal{S}_2, \diamond)$, two $1-1$ and \emph{onto} operators $A$ and $Z$ acting on domains $\mathcal{D}_1$ and $\mathcal{D}_2$, respectively  and  $1-1$ and \emph{onto} mapping $j$  between the domains $\mathcal{D}_1$ and $\mathcal{D}_2$. That is we have
\[
j : \mathcal{D}_1\to \mathcal{D}_2, \ \ \ \ \   A:\mathcal{D}_1\to \mathcal{S}_1 \ \ \ \  \ \ \mbox{and} \ \ \   \ \ \ \ Z:\mathcal{D}_2\to \mathcal{S}_2 .
\]
Consequently, the  diagram

  \[\begin{tikzcd}
 \mathcal{D}_1\arrow{r}{A}  \arrow[swap]{d}{j} &  (\mathcal{S}_1, \circ)  \arrow[swap]{d}{r} {} \\%
	\mathcal{D}_2 \arrow{r}{Z}& (\mathcal{S}_2, \diamond)
\end{tikzcd}
\]
 allows us to define the identification $r$ between  $ (\mathcal{S}_1, \circ)$ and $(\mathcal{S}_2, \diamond)$.

 Namely,  we say that
$\tilde{s} \in (\mathcal{S}_2, \diamond)$  is $\diamond$-\emph{analog}  or $\diamond$-\emph{counterpart}
of an $s\in (\mathcal{S}_1,\circ)$,  if  there exists
$x \in \mathcal{D}_1$ such that  $A(x)=s$,   $j(x)=y$ and  $Z(y)=\tilde{s}$. That is, we have  $r(A(x))=r(s):=\tilde{s}$, or $Z(j(x))=\tilde{s}.$

Similarly, $s\in(S_1,\circ)$ is $\circ$-analog of
 $\tilde{s}\in (\mathcal{S}_2,\diamond)$ if there exists $y\in \mathcal{D}_2$ such that $Z(y)=\tilde{s}, \ \  j^{-1}(y)=x$ and  $A(x)=s$.

\medskip
\medskip
\textbf{ 1. INFINITE DIVISIBILITY.}

\medskip
\textbf{1.1.} \  In  the  setting of this paper,  $(\mathcal{S}_1,\circ)\equiv (ID, \ast)$  is  the (classical)
convolution semigroup ID of all infinitely divisible probability measures 
$\mu$ on the real line with the convolution operation $\ast$. The characteristic functions $\phi$  (or the  Fourier transforms) are functions given as 
\[
\phi(t):=\int_{\Rset} e^{itx}\mu(dx), \   \  t  \in \Rset, \  \ \mbox{for some probability measure $\mu$}.
\]
Let $ \mathcal{D}_1:=\{\phi\in ID :  t\to (\phi)^{1/n}(t)  \mbox{is characteristic functions for} n=2,3,... \}$,
 that is,  $\mathcal{D}_1$ consists of all $\ast$ - infinitely divisible  characteristic functions. 
  
\noindent  Further,  let  $ \mathcal{D}_2:=\{[a,m]: a\in \Rset \ \mbox{and m is a finite Borel measure on $\Rset$,}\}$
 so it is a family of pairs [a,m].   Because of the following fundamental \emph{Khintchine representation formula}:
\begin{equation}
\phi \in \mathcal{D}_1\ \ \mbox{iff}  \ \ \phi(t)=\exp\{ ita +\int_{\Rset}(e^{itx}-1-\frac{itx}{1+x^2})\frac{1+x^2}{x^2}m(dx)  \}, \ t \in \Rset,
\end{equation}
for a uniquely determined parameters a  (a number) and m (a finite meaure), (for instance cf. [19], [17] or [1])  the mapping (an isomorphism) j given as
\begin{equation}
j:\mathcal{D}_1\to \mathcal{D}_2 \ \ \ \mbox{and} \ \ 
j (\phi):= [a,m] \ \ \mbox{iff}  \ \  \phi \ \mbox{is of the form  \  (1)},
\end{equation}
is well defined.

\medskip
\begin{rem}
\emph{(i) \  In some situations and applications instead of the finite measure m, in (1), one uses
a $\sigma$-finite measure \ \  $M(dx):=\frac{1+x^2}{x^2}m(dx)$ on $\Rset\setminus{\{0\}}$    \ \  (equivalently:   $m(dx):=\frac{x^2}{1+x^2}M(dx)$) and slightly changed the integrand as given below. Then the equality (1)  can be rewritten as follows:
\begin{equation*}
\phi(t)=\exp\{ itb- \frac{1}{2}t^2\sigma^2 +\int_{\Rset\setminus\{{0}\}}(e^{itx}-1- itx1_{\{|x|\le1\}}(x))M(dx) \}  \ \ \ (1a)
\end{equation*}
where
$\sigma^2:=m(\{0\})$ \ \  and \
  \  $b:=a+\int_{\Rset}x[1_{\{|x|\le 1\}}(x)-1/(1+x^2)]M(dx). $
\newline 
(ii)  \ The formula (1a) is called \emph{the L\'evy-Khintchine representation} of an infinitely divisible characteristic function (probability measures). The probability measure $\mu$ corresponding to (1a) is represented by the triple $\mu=[b, \sigma^2, M]$; cf. Parthasarathy (1967), Chapter VI;  or [1], [17].
\newline
(iii) \ The measure M has the  following stochastic meaning:  $M(A)$ is the expected number of jumps  that occur up to time  1 and are of sizes  in the set A,  of the corresponding L\'evy process $(Y(t), t\ge0)$, where $\phi$ is the characteristic function of  the random variable Y(1).}
\end{rem}

\medskip
\textbf{1.2.}  \  \  Further,  $(\mathcal{S}_2,\diamond)\equiv (ID, \boxplus)$ is the semigroup of all  $\boxplus$ free-infinitely divisible probability measures.
Namely for a probability measure $\nu$ on $\Rset$, one introduces its Voiculescu transform $V_{\nu}$ (an analogue of a characteristic function $\phi$) and an operation $\boxplus$ on measures in a such way that
\[
V_{\mu \boxplus \nu}(z)= V_{\mu}(z) +V_{\nu}(z);
\]
cf. Voiculescu (1999), cf.[20]. This in turn allows to introduce the notion of $\boxplus$-infinite divisibility and one has the following an analogue of the Khintchine  representations:

\begin{equation}
\nu \in (ID,\boxplus) \ \ \  \mbox{iff} \ \  \ V_{\nu}(z)= a + \int_{\Rset}\frac{1+zx}{z-x}m(dx), \ \ z\in \Cset \setminus \Rset,
\end{equation}
for uniquely determined a constant $a\in\Rset$ and finite (Borel) measure $m$; cf. [20] or [2], [3], [4].

\noindent However,  for the  uniqueness questions,  of  the representation (3),  is enough to consider Voiculescu transforms  only on the imaginary axis; Jurek (2006) , cf. [11]; (also [12], [13] and [8]).

\medskip
The formulas (1) and (2) suggest to define the following mappings:
\begin{equation}
A:  \mathcal{D}_1\to (ID, \ast) \ \  \ \mbox{given as } \ \  \ A(\phi):=\mu \ \ \mbox{iff} \ \phi(t)=\int_{\Rset}e^{itx}\mu(dx) \ \ t \in\Rset;
\end{equation}
and then to define $ r: (ID,\ast)\to (ID,\boxplus)$  as
\begin{equation}
 r(\mu):=\tilde{\mu}\  \ \mbox{iff} \ \ V_{\tilde{\mu}}(it)= it ^2\int_0^\infty  \overline{\log \phi(s)} e^{-ts}ds, \ \  t>0.
\end{equation}
Consequently, by (5),  we get  the composition  $r \circ A:\mathcal{D}_1\to (ID,\boxplus)$.

\medskip
On the other hand, on the level Z, using the mapping j ( from ( 2) ) we define
\begin{equation}
Z: \mathcal{D}_2\to (ID,\boxplus) \  \ \ \mbox{as} \ \ \  Z([a,m]):=\nu \  \ \mbox{iff} \ \ V_{\nu}(it) = a + \int_{\Rset}\frac{1+itx}{it-x}m(dx);
\end{equation}
So we have the following question:
\begin{equation}
\mbox{ Does} \ j(\phi)=[a,m] \ \mbox{imply that} \ r(A(\phi))=Z(j(\phi)) \ ? \ \ \  \mbox{That is,} \ \tilde{\mu}=\nu \ ?
\end{equation}
\begin{rem}
\emph{The idea of inserting  the same parameters a and m  into two different integral kernels (1) and (3) is due to  Bercovici - Pata (1999), Section 3; cf. [4] . [In our notation, it is the mapping on the level Z.]  
\newline
A different approach was proposed in Jurek (2006, 2007), cf. [11] and [12] and more recently repeated in Jurek (2016), cf. [16]. The key in those papers was the technique of the   random integral representation.  Here it is the mapping on the level A.}
\end{rem}

\medskip
\medskip\textbf{1.3.} \ \ Below we give straightforward  connections between the formulas (1) and (3) and prove the equality (7).
As in previous papers Jurek (2006), cf. [11] (or [12], [13]) we consider the transforms $V_{\nu}$ only on the imaginary line.

\begin{thm}
For each classical infinitely divisible $\mu \in (ID,\ast)$  with the  characteristic function $\phi_{\mu}$ there exists its unique free-infinitely divisible an analogue measure $\tilde{\mu}$ ($\tilde{\mu}\in (ID,\boxplus)$) such that its Voiculescu transform $V_{\tilde{\mu}}$ is given as
\begin{equation*}
 \ \ V_{\tilde{\mu}}(it)= it^2\,\int_0^{\infty} \overline{\log \phi_{\mu}(s)}\,e^{-ts}ds, \ \  t>0.  \ \ \  (A)
\end{equation*}
Furthermore, if $\mu$ has representation $\mu=[a,m]$ (in the Khintchine formula) then
\begin{equation*}
\ V_{\tilde{\mu}}(it) =a+\int_{\Rset}\frac{1+itx}{it-x}m(dx),  \ \ \ t>0.  \ \ \   (Z)
\end{equation*}
For a symmetric $\mu=[0,m]$ (real $\phi_{\mu}$)  we have
\begin{equation*}
 V_{\tilde{\mu}}(it)= it \int_0^{\infty}\int_{\Rset}( \cos(sx)-1)\frac{1+x^2}{x^2}m(dx) \,t\, e^{-st}ds= - it\,\int_{\Rset}\frac{1+x^2}{t^2 +x^2}m(dx)
\end{equation*}
\end{thm}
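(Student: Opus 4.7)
\medskip

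\textbf{Proof sketch.} The plan is to start from the Khintchine representation (1) of $\log\phi_{\mu}(s)$, take its complex conjugate along the real axis, apply the Laplace transform against $e^{-ts}$ prescribed by formula (A), exchange the order of integration by Fubini, and then simplify the resulting $x$-integrand algebraically until it equals the kernel $\tfrac{1+itx}{it-x}$ of (Z). Once (A) and (Z) are identified, the existence and uniqueness of $\tilde{\mu}$ follows directly from the free Khintchine representation (3) applied to the same pair $[a,m]$.

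Concretely I would first write, for real $s$,
\[
\overline{\log\phi_{\mu}(s)}\,=\,-ias\,+\,\int_{\Rset}\bl e^{-isx}-1+\tfrac{isx}{1+x^{2}}\br\tfrac{1+x^{2}}{x^{2}}\,m(dx),
\]
which is (1) with conjugation applied termwise. Multiplying by $it^{2}e^{-ts}$ and integrating in $s$, the drift piece $-ias$ contributes $it^{2}(-ia)\int_{0}^{\infty}s\,e^{-ts}ds=a$. For the integrated part I would invoke Fubini, justified by the standard polynomial bound $|\log\phi_{\mu}(s)|=O(1+s^{2})$ (immediate from the Khintchine integrand together with the finiteness of $m$), so that $s\mapsto\overline{\log\phi_{\mu}(s)}\,e^{-ts}$ is absolutely integrable on $(0,\infty)$ for every $t>0$.

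After exchanging the order of integration and evaluating the three elementary Laplace transforms $\int_{0}^{\infty}e^{-(t+ix)s}ds=(t+ix)^{-1}$, $\int_{0}^{\infty}e^{-ts}ds=t^{-1}$ and $\int_{0}^{\infty}se^{-ts}ds=t^{-2}$, the inner bracket collapses to $\tfrac{-ix}{t(t+ix)}+\tfrac{ix}{t^{2}(1+x^{2})}$. Multiplying this by $it^{2}\cdot\tfrac{1+x^{2}}{x^{2}}$ and combining over a common denominator yields $\tfrac{tx-i}{t+ix}$. The one genuine algebraic step is to notice the clean factorization $-i(1+itx)=tx-i$ and $-i(it-x)=t+ix$, so that $\tfrac{tx-i}{t+ix}=\tfrac{1+itx}{it-x}$; this finishes the identification of (A) with (Z). I expect this compact factorization to be the main (though mild) obstacle, since without it the integrand looks opaque.

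For a symmetric $\mu=[0,m]$, both $\phi_{\mu}$ and $m$ are symmetric, the odd term $\tfrac{isx}{1+x^{2}}$ vanishes under $m$, and $\log\phi_{\mu}(s)=\int_{\Rset}(\cos sx-1)\tfrac{1+x^{2}}{x^{2}}m(dx)$ is real, so conjugation is trivial. Rewriting $it^{2}e^{-ts}=it\cdot(te^{-ts})$ and applying Fubini once more, the inner integral $\int_{0}^{\infty}(\cos sx-1)te^{-ts}ds$ equals $\tfrac{t^{2}}{t^{2}+x^{2}}-1=-\tfrac{x^{2}}{t^{2}+x^{2}}$, and this cancels the factor $x^{2}$ in the denominator of the Khintchine weight, leaving exactly the stated $-it\int_{\Rset}\tfrac{1+x^{2}}{t^{2}+x^{2}}m(dx)$.
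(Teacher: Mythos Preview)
Your proposal is correct and follows essentially the same route as the paper: substitute the Khintchine representation (1) into (A), apply Fubini, evaluate the three elementary Laplace integrals, and then simplify algebraically to reach $\tfrac{tx-i}{t+ix}=\tfrac{1+itx}{it-x}$; the symmetric case is handled identically via $\int_0^\infty\cos(as)e^{-ts}ds=t/(t^2+a^2)$. The only minor difference is organizational: the paper first cites an earlier result to assert that (A) is a bona fide Voiculescu transform and then proves (A)$=$(Z), whereas you deduce existence and uniqueness of $\tilde\mu$ directly from the identity (A)$=$(Z) together with the free Khintchine representation (3)---which is in fact a cleaner argument, and your explicit Fubini justification via the $O(1+s^2)$ bound is a welcome addition that the paper leaves implicit.
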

\begin{proof}
The fact that the function given in (A),  indeed, defines Voiculescu transform of an free-infinitely divisible measures was already shown in Jurek (2007), Corollary 6; cf. [12] (also  repeated in [13]). 

On the other hand, formula (Z) is obviously a Voiculescu transform of a free-infinitely measure in view of the characterization (3) above.

In order to show that both ways we get the same measure we show that
\begin{equation}
it^2\,\int_0^{\infty} \overline{\log \phi_{[a,m]}(s)}\,e^{-ts}ds=a+\int_{\Rset}\frac{1+itx}{it-x}m(dx),  \ \ \ t>0.  
\end{equation}
Taking the L\'evy exponent, as it is given  in Khintchine formula (1), computing the Laplace transform of the shift part $ita$ and then interchanging the order of integration  we have that
\begin{multline*}
LHS =  it^2\Big(  - \frac{i a}{t^2}  +\int_{\Rset}\frac{1+x^2}{x^2}\,[\int_0^{\infty}(e^{-isx}-1+\frac{isx}{1+x^2} )\,e^{-st}ds\,]\,m(dx) \Big)\\
= it^2\Big(  - \frac{i a}{t^2}  +\int_{\Rset}\frac{1+x^2}{x^2}\,[\,\frac{1}{ix+t}-\frac{1}{t}+\frac{ix}{1+x^2}\frac{1}{t^2}\,]\, m(dx)\Big)\\
= a  +\int_{\Rset}\frac{1+x^2}{x^2}\,it^2\,[\,\frac{-ix}{t (ix+t)}+\frac{ix}{(1+x^2)t^2}\,]\, m(dx)\Big)\\= a  +\int_{\Rset}\frac{1+x^2}{x^2}\,[\,\frac{tx}{ix+t}-\frac{x}{1+x^2}\,]\, m(dx)\Big)\\= a +\int_{\Rset}\frac{tx-i}{ix+t}m(dx) 
=a +\int_{\Rset}\frac{1+ itx}{it-x}m(dx)=RHS,
\end{multline*}
which concludes proof of the identity (8) and the first part of Theorem 1.

For the seconf part of the Theorem, we calculate as above, that is, we change the order of integration, utilize the fact that $m$ is symmetric measure and use the Laplace transform
\[
\int_0^{\infty}\cos(as) e^{-st}ds=\frac{t}{t^2 +a^2}.
\]
This conclude the argument for the second equality in Theorem 1.
\end{proof}

\begin{cor}
Let $\mathcal{E}_t, t>0,$ denotes  the exponential random variable  with parameter $t$ and the probability density $ t \,e^{-t x}1_{(0,\infty))}(x)$. Then
\[
\mathbb{E}[\log \phi_{\mu}(-\mathcal{E}_t)]=\int_0^{\infty} \overline{\log(\phi_{\mu}(s))}\,\,(t e^{-ts})\,ds= (it)^{-1}\,V_{\tilde{\mu}}(it), \ \ \mbox{for} \ \  t >0.
\]
Furthermore, if $\mu=[0,m]$ is a symetric (i.e., $\phi_{\mu}$ is real)  and m is a probability distribution of a random variable X that is stochastically independent of $\mathcal{E}_t$ then
\begin{equation}
\mathbb{E}\big[(1-\cos (\mathcal{E}_t\,X)\,)\,\frac{1+X^2}{X^2} \big]=\mathbb{E}\big[ \frac{1+X^2}{t^2+X^2}  \big], \ \ \mbox{for} \ \ t>0.
\end{equation}
\end{cor}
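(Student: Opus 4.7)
The plan is to read Corollary 1 as a probabilistic restatement of Theorem 1. For the first chain of equalities I would expand $\mathbb{E}[\log\phi_{\mu}(-\mathcal{E}_t)]$ as $\int_0^{\infty}\log\phi_{\mu}(-s)\,t e^{-ts}\,ds$ using the exponential density. Because $\phi_{\mu}$ is a characteristic function one has $\phi_{\mu}(-s)=\overline{\phi_{\mu}(s)}$, and the distinguished (continuous) logarithm of an infinitely divisible characteristic function commutes with complex conjugation, whence $\log\phi_{\mu}(-s)=\overline{\log\phi_{\mu}(s)}$. That gives the first equality. The second equality is obtained by dividing formula (A) of Theorem 1 through by $it$.

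For the identity (9) I would specialize to the symmetric case $\mu=[0,m]$ treated in the last display of Theorem 1. Reading $m$ as the law of $X$, so that $\int_{\R}f(x)\,m(dx)=\mathbb{E}[f(X)]$, the L\'evy exponent
\[
\log\phi_{\mu}(s)=\int_{\R}(\cos(sx)-1)\frac{1+x^2}{x^2}\,m(dx)
\]
is real-valued, so the conjugation bar is redundant. Substituting $s=\mathcal{E}_t$, taking expectation, and applying Fubini/Tonelli (legitimate because the non-negative integrand $(1-\cos(sx))(1+x^2)/x^2$ is dominated by $\tfrac12 s^2(1+x^2)$ and $\mathcal{E}_t\in L^2$) yields
\[
\mathbb{E}\bigl[\log\phi_{\mu}(-\mathcal{E}_t)\bigr]=-\,\mathbb{E}\Bigl[(1-\cos(\mathcal{E}_t X))\tfrac{1+X^2}{X^2}\Bigr].
\]
On the other hand, the closed-form evaluation in Theorem 1 gives
\[
(it)^{-1}V_{\tilde{\mu}}(it)=-\int_{\R}\frac{1+x^2}{t^2+x^2}\,m(dx)=-\,\mathbb{E}\Bigl[\frac{1+X^2}{t^2+X^2}\Bigr].
\]
The first part of the corollary identifies these two expressions, and cancelling the common minus sign produces (9).

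The only mildly delicate point is the commutation $\overline{\log\phi_{\mu}(s)}=\log\overline{\phi_{\mu}(s)}$, which is standard once one works with the continuous branch of the L\'evy exponent associated with the Khintchine triple $[a,m]$; beyond that the argument is purely a substitution plus one use of Fubini, requiring no analytic input beyond Theorem 1 itself.
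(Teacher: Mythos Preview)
Your proof is correct and follows exactly the line the paper intends: the paper's own justification is the single sentence ``It follows from the second identity in Theorem 1,'' and you have simply unpacked that reference by expanding the expectation, invoking the conjugation symmetry of the L\'evy exponent, and reading off both sides from the displays of Theorem 1. One small remark: your domination bound $\tfrac12 s^2(1+x^2)$ tacitly needs $\mathbb{E}[X^2]<\infty$, which is not assumed, but since the integrand $(1-\cos(sx))(1+x^2)/x^2$ is non-negative Tonelli alone already justifies the interchange, as you also note.
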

It follows from  the  second indentity in Theorem 1.

\medskip
\textbf{1.4.  Illustration of the  application of Theorem 1.} 
Let $C$ stands for \emph{hyperbolic-cosh} variable or its probability distribution. Then it is $\ast$-infinitely divisible and its characteristic function is equal  $\phi_C (t)= (\cosh t)^{-1}$.
From Theorem 1,  on the level of the characteristic functions (the mapping A),  its 
free-infinitely divisible analog $\tilde{C}$ (of the hyperbolic-cosh) has the following Voiculescu transform:
\begin{equation}
V_{\tilde{C}}(it)=- it^2\int_0^\infty \log \cosh(s)\, e^{-ts}ds=i[1-t\beta(t/2)], \  t>0,
\end{equation}
where $\beta$ is a special function defined in (19), formula (vii). Equality (10) is shown in Section 3.1 below. 

On the other, on the level of the parameters [a,m] (the mapping Z),  the hyperbolic-cosh has $a=0$ and
$m(dx)= \,\frac{1}{2}\,\frac{|x|}{1+x^2}\,\frac{1}{\sinh (\pi |x|/2)}\,dx  $. Consequently, by (8),  its $\boxplus$-free infinitely divisible
analog $\tilde{C}$ has the following Voiculescu transform
\begin{equation}
V_{\tilde{C}}(it)=- it  \int_0^\infty \frac{|x|}{t^2+x^2}\, \frac{1}{\sinh (\pi|x|/2)} dx=i \,[t\,\beta(t/2+1)- 1],
\end{equation}
for computational details see Section 3.1 below. 

As a byproduct  of (10) and (11)  we get the following functional  relation for the special function $\beta$:
\begin{equation}
\beta(s)+ \beta(s+1)= 1/s,  s>0,
\end{equation}
However, the formula (12) can also be obtained from other known representation of the special function $\beta$; cf. Section 3.0, formula (ix).

\begin{rem}
 \emph{Our two ways (two levels, two mappings) of getting free-infinitely divisible analogues of classical infinitely divisible characteristic function  may produce new unknown before explicite relation between some special functions.}
\end{rem}

\medskip
\textbf{2. SELFDECOMPOSABILITY.}

\medskip
\textbf{2.1.} \ An important and  a proper subclass of the class $( ID,\ast)$,  of all infinitely divisible measures,  is  \emph{the  class L},  also known
as the class of \emph{selfdecomposable probability measures}; cf. Jurek-Vervaat (1983), cf. [15] or Jurek and Mason (1993), cf. [14], Chapter 3.

 Let us recall  that the class L contains, among others, all stable probability measures, exponential distributions, t-Student distribution, chi-square, gamma,  Laplace,  hyperbolic-sine and hyperbolic-cosine measures (characteristic functions), etc; cf. Jurek (1997), cf. [10]. 

\medskip
For the purposes of this paper let us recall that for $\mu\in L$ (or equivalently for characteristic  function $\phi \in L) $ there exists an unique $\nu\in ID_{\log}$, \, i.e.,  infinitely divisible measures with finite logarithmic moment (or equivalently there exists an unique $\psi\in ID_{\log}$ )
such that 
\begin{equation}
\log \psi(t)= t \frac{d}{dt}\,\log \phi(t); \  \ \ \mbox{equivalently} \ \ 
\log\phi(t)=\int_0^t\log\psi(s)\frac{ds}{s}
\end{equation}
The above relations follow from \emph{the random integral representation of a selfdecomposable distributions}:  for each distribution $\mu \in L$ there exists a unique L\'evy process $Y_{\nu}$  such that
\begin{equation}
\mu=\mathcal{L}\big(\int_0^{\infty}e^{-s}dY_{\nu}(s)\big),  \ \  Y_{\nu}(s), s\ge 0, \ \ \ \mathcal{L}(Y_{\nu}(1))=\nu\in ID_{\log};
\end{equation}
cf. Jurek and Mason (1993), cf. [14] Theorems 3.4.6,  3.6.8 and Remark 3.6.9(4) or Jurek and Vervaat (1983), cf. [15]. 

The characteristic function $\psi$  (in (13)) is referred to as \emph{the background  driving characteristic function} (BDCF) of $\phi \in L$ and $Y_{\nu}$  \emph{the background driving L\'evy process} (BDLP) of $\mu$. 
\begin{rem}
\emph{A very nice argument, based on the random integral representation (14), for the existence of densities for all real-valued selfdecomposable variables is due to Jacod (1985), cf. [7]. His proof  is repeated  in Jurek (1997), cf.[10], pp.104-105.}
\end{rem}

\medskip
\noindent \textbf{2.2.} 
Here is a technical property (limit at infinity) of L\'evy exponent $\Phi$  of an infinitely divisible characteristic function  with a real parameter  $a$ and a finite measure $m$, that is,
\begin{multline}
 \Phi(t):=  ita + \int_{\Rset}(e^{i tx}-1- \frac{i tx}{1+x^2})\frac{1+x^2}{x^2}m(dx) \\   \qquad \qquad \qquad \qquad =
itb +  \int_{\Rset}(e^{i tx}-1-  itx 1_{|x|\le 1}(x))\frac{1+x^2}{x^2}m(dx), 
\end{multline} 
where  $ b:=a+\int_{\Rset}x[1_{\{|x|\le 1\}}(x)-1/(1+x^2)]\frac{1+x^2}{x^2}m(dx)$ and the finiteness  of the measure m guarantees the existence of the integral.

\begin{lem}
For any constants $c_1>0$ and $c_2>0$  and any L\'evy exponent $\Phi$ we have
\begin{equation*}
\lim_{t\to \infty} t^{c_1}e^{- c_2 t}\,\Phi(t)=   \lim_{t\to \infty} t^{c_1}e^{- c_2 t}\int_{\Rset}(e^{itx}-1- itx 1_{|x|\le 1}(x))\frac{1+x^2}{x^2}m(dx)=0.
\end{equation*}
\end{lem}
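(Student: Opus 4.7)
The plan is to show that $\Phi(t)$ grows at most polynomially in $t$, so that multiplying by $e^{-c_2t}$ forces the limit to be zero; the two limits in the statement coincide because the shift term itself is negligible after multiplication by $t^{c_1}e^{-c_2t}$.

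\medskip

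First I would handle the shift part. In the second form of (15), the shift is $itb$, which contributes $t^{c_1+1}|b|\, e^{-c_2 t}\to 0$ as $t\to\infty$. Consequently, the two limits in the statement are equal, and it suffices to prove that the integral term, multiplied by $t^{c_1}e^{-c_2 t}$, tends to zero.

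\medskip

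Next I would derive a polynomial-in-$t$ bound for the integral. Split $\R = \{|x|\le 1\}\cup\{|x|>1\}$. On $\{|x|\le 1\}$, the second-order Taylor estimate $|e^{itx}-1-itx|\le t^2x^2/2$ together with $(1+x^2)/x^2\cdot x^2 = 1+x^2\le 2$ gives
\[
\Bigl|(e^{itx}-1-itx)\tfrac{1+x^2}{x^2}\Bigr|\le t^2,
\]
so this part contributes at most $t^2\, m(\{|x|\le 1\})$. On $\{|x|>1\}$, the compensator $itx\mathbf{1}_{|x|\le1}(x)$ vanishes and $|e^{itx}-1|\le 2$, $(1+x^2)/x^2\le 2$, which yields an integrand bounded by $4$ and a contribution of at most $4\,m(\{|x|>1\})$. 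Since $m$ is finite, these bounds combine to give a constant $C=C(m)$ with
\[
\Bigl|\int_{\R}(e^{itx}-1-itx\mathbf{1}_{|x|\le 1}(x))\tfrac{1+x^2}{x^2}m(dx)\Bigr|\le C(1+t^2),\qquad t\ge 0.
\]

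\medskip

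Finally, I combine these two estimates: multiplying by $t^{c_1}e^{-c_2 t}$ and observing that $t^{c_1}(1+t^2)e^{-c_2 t}\to 0$ for every $c_1,c_2>0$ (polynomial growth is dominated by any positive-rate exponential decay), we conclude that both limits in the lemma equal zero. I do not foresee a real obstacle; the only thing to be careful about is using the cut-off kernel $itx\mathbf{1}_{|x|\le1}(x)$ (not $itx/(1+x^2)$) so that the Taylor bound on $\{|x|\le1\}$ and the trivial bound on $\{|x|>1\}$ fit together without introducing a spurious linear-in-$t$ term from the compensator on large $|x|$.
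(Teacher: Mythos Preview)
Your proof is correct and follows essentially the same route as the paper's: split the integral at $|x|=1$, use the second-order Taylor bound $|e^{itx}-1-itx|\le t^2x^2/2$ on $\{|x|\le1\}$ and the trivial bound $|e^{itx}-1|\le 2$ on $\{|x|>1\}$, obtain a polynomial-in-$t$ bound, and let the exponential factor kill it. The only cosmetic difference is that the paper disposes of the shift by first noting the case $m=0$ is obvious and then assuming $b=0$, whereas you handle the $itb$ term directly; your treatment is arguably a bit cleaner.
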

\begin{proof}
For pure degenerate $\Phi$, i.e., when $m=0$  in (15), Lemma 1 is obvious.  

Let us assume that  $b=0$. Since
\begin{multline*} 
|e^{itx}-1- itx|\le \min(\frac{|tx|^2}{2}, 2|tx|), \ \ \mbox{and} \ \  
|e^{itx}-1|\le \min( |tx|, 2)\le 2; 
\end{multline*}
(for instance, Billingsley (1986), cf. [5], pp. 352 and 353) therefore from (15) we get
\begin{multline*}
 t^{c_1} e^{-c_2 t}\, |\Phi(t)|  \le  t^{c_1}e^{- c_2 t} 
\int_{|x| \le 1}\frac{t^2}{2} x^2\frac{1+x^2}{x^2}m(dx) +    
 t^{c_1} e^{- c_2 t} \int_{|x|>1} 2\frac{1+x^2}{x^2}m(dx)  \\ \le \frac{1}{2} t^{c_1+2}e^{- c_2 t}\int_{|x| \le 1} (1+x^2) m(dx)  +  2 t^{c_1} e^{-c_2 t} \int_{|x|>1}(1+x^{-2})m(dx) \to 0 \ \mbox{as} \ t \to \infty,
\end{multline*}
which completes a proof of the lemma.
\end{proof}

Here, in Theorem 2, we have a free-selfdecomposability analogue of the differential relations (13), for the background driving equation, known for the classical selfdecomposability.
\begin{thm}
Let $\tilde{\phi}$ and $\tilde{\psi}$ be free-analogues of a selfdecomposable characteristic  function $\phi$ and its background driving characteristic function $\psi$, respectively.  Then  their Voiculescu transforms $V_{\tilde{\phi}}$ and $V_{\tilde{\psi}}$ satisfy the differential equation:
\begin{equation}
V_{\tilde{\psi}}(it) = V_{\tilde{\phi}}(it) -t \frac{d}{dt}[V_{\tilde{\phi}}(it)], \ \  t>0.
\end{equation}
Equivalently, in terms of $V_{\tilde{\psi}}$, we get
\begin{equation}
V_{\tilde{\phi}}(it)\,-t\,  V_{\tilde{\phi}}(i)=  - t  \int_1^t s^{-2}V_{\tilde{\psi}}(is)ds = t  \int_1^t V_{\tilde{\psi}}(is)\,d(s^{-1})  ,  \ \ t>0.
\end{equation}
\end{thm}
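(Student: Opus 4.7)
The natural approach is to work directly from the representation (A) in Theorem 1, applied to both $\phi$ and its BDCF $\psi$, combined with the classical differential relation (13) between $\phi$ and $\psi$.

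First I would write
\[
V_{\tilde{\phi}}(it)=it^{2}F(t),\quad F(t):=\int_{0}^{\infty}\overline{\log\phi(s)}\,e^{-ts}ds,
\]
and similarly $V_{\tilde{\psi}}(it)=it^{2}\!\int_{0}^{\infty}\overline{\log\psi(s)}\,e^{-ts}ds$. Substituting the BDCF identity $\log\psi(s)=s\,\frac{d}{ds}\log\phi(s)$ (taking complex conjugates inside the derivative is fine since $\overline{\,\cdot\,}$ commutes with $d/ds$), I obtain
\[
V_{\tilde{\psi}}(it)=it^{2}\int_{0}^{\infty}s\,\frac{d}{ds}\overline{\log\phi(s)}\,e^{-ts}ds.
\]

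The central computation is an integration by parts with $u=s\,e^{-ts}$ and $dv=\frac{d}{ds}\overline{\log\phi(s)}\,ds$. The boundary contribution at $s=0$ vanishes because of the factor $s$; the boundary contribution at $s=\infty$ is precisely what Lemma 1 is designed to kill (applied to the Lévy exponent $\Phi(s)=\log\phi(s)$ with constants $c_{1}=1$, $c_{2}=t>0$). The interior term yields
\[
\int_{0}^{\infty}s\,\frac{d}{ds}\overline{\log\phi(s)}\,e^{-ts}ds=-\int_{0}^{\infty}e^{-ts}\,\overline{\log\phi(s)}\,ds+t\int_{0}^{\infty}s\,e^{-ts}\,\overline{\log\phi(s)}\,ds,
\]
so $V_{\tilde{\psi}}(it)=-it^{2}F(t)+it^{3}(-F'(t))$, after noting that $F'(t)=-\int_{0}^{\infty}s\,e^{-ts}\overline{\log\phi(s)}\,ds$ (differentiation under the integral is justified by the Laplace-type decay together with Lemma 1 bounds on $\log\phi$). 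On the other hand, from $V_{\tilde{\phi}}(it)=it^{2}F(t)$ one computes
\[
t\,\tfrac{d}{dt}V_{\tilde{\phi}}(it)=2it^{2}F(t)+it^{3}F'(t),
\]
so $V_{\tilde{\phi}}(it)-t\,\frac{d}{dt}V_{\tilde{\phi}}(it)=-it^{2}F(t)-it^{3}F'(t)$, which matches the expression obtained for $V_{\tilde{\psi}}(it)$. This establishes (16).

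For the equivalent form (17), writing $W(t):=V_{\tilde{\phi}}(it)$ I would recast (16) as the exact derivative identity
\[
\frac{d}{dt}\!\left(\frac{W(t)}{t}\right)=\frac{tW'(t)-W(t)}{t^{2}}=-\frac{V_{\tilde{\psi}}(it)}{t^{2}},
\]
and integrate from $1$ to $t$, obtaining $W(t)/t-W(1)=-\int_{1}^{t}s^{-2}V_{\tilde{\psi}}(is)\,ds$; multiplying by $t$ and rewriting $-s^{-2}ds=d(s^{-1})$ gives both expressions in (17). The main obstacle I foresee is the rigorous justification of the boundary term at infinity and of differentiation under the integral sign; both reduce to the decay estimates collected in Lemma 1, so the argument is routine once that lemma is in hand.
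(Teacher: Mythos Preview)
Your argument is correct and follows essentially the same route as the paper: both proofs start from the Laplace-type representation (A) of Theorem~1, insert the classical BDCF relation (13), integrate by parts (invoking Lemma~1 to kill the boundary term at infinity), and then match the result against $V_{\tilde{\phi}}(it)-t\,\frac{d}{dt}V_{\tilde{\phi}}(it)$; the derivation of (17) via the exact derivative $\frac{d}{dt}\bigl(t^{-1}V_{\tilde{\phi}}(it)\bigr)$ is also identical. The only cosmetic difference is that you package the computation through the auxiliary function $F(t)$ and work with $\overline{\log\phi(s)}$, whereas the paper writes $\log\phi(-v)$ and manipulates the integrals directly.
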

\begin{proof}
Note that using the definion (A) from Theorem 1, the relation (13) for classical selfdecomposabity and then Lemma 1 we have
\begin{multline*}
V_{\tilde{\psi}}(it) :=it^2\int_0^{\infty}\log \psi(-v)  e^{-t v}dv = it^2\int_0^{\infty}(\log \phi (-v))^{\prime} (-v) \,e^{-t v}dv  \\=
 it^2\int_0^{\infty}(\log \phi)^{\prime}(-v) (-1)\, (-v) \,e^{-t v}dv=   it^2\int_0^{\infty}(\log \phi)^{\prime}(-v) \,v e^{-t v}dv\\= it^2\big[\log \phi(-v)) v\,e^{-t v}\,\,|_{v=0}^{v=\infty} -\int_0^{\infty}\log \phi(-v)\,(1-t v)\,e^{-t v}dv\big]\\
= it^2[\int_0^{\infty} - \log \phi(-v)e^{-tv}dv+t\int_0^{\infty}\log \phi(-v)\,ve^{-tv}dv] \\ = -V_{\tilde{\phi}}(it)- it^3 \frac{d}{dt}[\int_0^{\infty}\log \phi(-v)e^{-tv}dv] = -V_{\tilde{\phi}}(it)- it^3  \frac{d}{dt}[ (it^2)^{-1}V_{\tilde{\phi}}(it)]\\=-V_{\tilde{\phi}}(it)- t^3  \frac{d}{dt}[  t^{-2}V_{\tilde{\phi}}(it)]= - V_{\tilde{\phi}}(it)-t^3[-2t^{-3}V_{\tilde{\phi}}(it)+t^{-2}\frac{d}{dt}V_{\tilde{\phi}}(it)]\\= V_{\tilde{\phi}}(it) - t\, \frac{d}{dt}V_{\tilde{\phi}}(it), \qquad \qquad 
\end{multline*}
which completes a proof of equality (16).

For the equality (17), note that (16) is a first-order linear differential equation that we can solve by the integrating factor method. More explicitly, note that  (16)  can be rewritten as follows
\[
t^{-1} V_{\tilde{\psi}}(it) = t^{-1} V_{\tilde{\phi}}(it) - \frac{d}{dt}[V_{\tilde{\phi}}(it)]= -t\,\frac{d}{dt} \,\big[ \frac{ V_{\tilde{\phi}}(it)}{t}\big]  
\] 
Hence, dividing by t and then integrating both sides over the interval [1,t] (or [t,1]) , we get
\[
 \frac{ V_{\tilde{\phi}}(it)}{t} - V_{\tilde{\phi}}(i) = - \int_1^t s^{-2}V_{\tilde{\psi}}(is)\,ds ,
\]
which completes the proof of the Theorem 2.
\end{proof}

\textbf{2.3. Illustration of the application of Theorem 2.} The hyperbolic-cosh function $\phi_C(t)=(\cosh t)^{-1}$ is selfdecmposable; Jurek (1996), cf. [9]. From (10) and (16) (in Theorem 2) we obtain $V_{\tilde{\psi_C}}$, free-probability analog of the background driving characteristic function $\psi_C$, as 
\begin{equation}
V_{\tilde{\psi _C}}(it)= i\,\big[ 1+\frac{1}{2}\,t^2\,\beta^\prime (\frac{1}{2}t )\,\big]=i\,\big[1+ \frac{t^2}{2}\zeta (2,\frac{t}{2})-\frac{t^2}{4}\zeta(2,\frac{t}{4}) \big], t>0.
\end{equation}
For the first equality one needs to put (10) into (16) and then use the formula that expresses $\beta^\prime$ in terms of Riemann's function $\zeta (2, a)$; for deatails  See Section 3.0.  [Two more ways of getting the above formula are  discussed in Section 4.]

\medskip
\textbf{3. FREE - PROBABILITY ANALOGUES OF THE  HYPERBOLIC CHARACTERISTIC FUNCTIONS.}

\medskip
\medskip
\textbf{3.0.} \ For an ease of reference we recall  definitions and basic properties of some  special function. All formulas followed by  a boldface reference number are taken  from  I. S. Gradshteyn, I. M. Ryzhik (1994), cf. [6].
\begin{multline}
(i)\ \ (a) \ \ \Gamma(z):=\int_0^{\infty}x^{z-1}e^{-x}dx, \ \  \ \Re z>0;  \ \ \ \mbox{(Euler function)}  \\  \qquad (b) \  \ \psi(z):=\frac{d}{dz}\ln \Gamma(z),   \  \ \ \Re z>0\ \  \mbox{ (digamma function)} \qquad \qquad \qquad  \\
(ii) \ \ \ \   \psi_n(z)\equiv \psi^{(n)}(z)=(-1)^{n+1}\,n! \,\,\zeta(n+1, z);  \ \ \  \textbf{8.363(8)},  \qquad \qquad \qquad \qquad \qquad \\  \mbox{( n-th derivative; called also as  polygamma)} . \\ 
(iii)  \ \   \ \ \psi(2z)=\frac{1}{2}(\psi(z)+\psi(z+\frac{1}{2})) +\ln 2 ; \ \ \  \       \textbf{8.365(6)}; \ \  \  \qquad \  \qquad \qquad \\
 (iv)\ \  \zeta(s,a):=\sum_{k=0}^{\infty}\frac{1}{(k+a)^s}, \ \Re s >1, \ -a \notin \mathbb{N},\ \mbox{(Riemann's zeta functions)};  \\   
(v) \ \  \ \zeta(s,a+1)=\zeta(s,a)-\frac{1}{a^s}; \ \ \ \ \  \zeta(s, a+1/2)=2^s\zeta(s,2a)-\zeta(s,a);\\ 
(vi) \ \ \  \zeta(2, t) - \frac{1}{4} \zeta(2, \frac{t}{2})= \frac{1}{4} \zeta(2, \frac{t+1}{2}); \ \ \  \mbox{(form (v))}; \qquad \qquad \qquad \qquad  \qquad \qquad \\
(vii) \    \ \ \beta(x):=\frac{1}{2}[\ \psi(\frac{x+1}{2})-\psi(\frac{x}{2})\ ], \ \ \beta(x) =\sum_{k=0}^{\infty}\frac{(-1)^k}{x+k},  \ \ -x \notin \mathbb{N}, \ \   \textbf{8.732(1)}.   \\
(viii)  \ \   \beta^\prime(x)= -\sum_{k=0}^\infty\frac{(-1)^k}{(x+k)^2}  =  \zeta (2, x)-\frac{1}{2}\,\zeta(2,\frac{x}{2}); \qquad   \ \ \  \textbf{8.374}; \qquad \qquad \qquad \\
(ix)  \  \ \ \  \beta(t)=\int_0^{\infty}\frac{1}{1+e^{-x}}\,e^{-tx}\,dx,  \ \ \ \Re t>0; \ \qquad  \textbf{8.371(2)} ;  \qquad \qquad \ \\
(x) \ \ \  ci(x)\equiv Ci(x):=- \int_x^\infty \frac{\cos u}{u}du;  \qquad \qquad \qquad \qquad  \qquad \qquad \qquad  \\ si(x):=-\int_x^\infty \frac{\sin u}{u}du=-\frac{\pi}{2} + Si(x), \ \mbox{where} \ \ Si(x):=\int_0^x \frac{\sin u}{u}du; \ \ 
\end{multline}

\medskip
\medskip
\textbf{3.1. Hyperbolic-cosine random variable.}

\medskip
Let $C$ stands for  \emph{the standard hyperbolic cosine variable}, that is,  the random variable with the characteristic  function
\begin{multline}
\phi_{C}(t):=\frac{1}{\cosh t}
=\exp\int_{\Rset}(\cos(tx)-1)\frac{1+x^2}{x^2}\,[\,\frac{1}{2}\,\frac{|x|}{1+x^2}\,\frac{1}{\sinh( \pi |x|/2)}\,]\,(dx)\\=\exp\int_{\Rset}(\cos(tx)-1)\,[\frac{1}{2}\frac{1}{|x| \sinh ( \pi |x|/2)}]\,(dx),
\end{multline}
where in the first  bracket [... ] is the density of the Khintchine finite measure $m_C$ corresponding to $\phi_C$ in the representation  (1)  and in the second one, is the density of  the sigma-finite L\'evy (spectral) measure M in (1a);  Jurek and Yor (2004), cf. [16].

\begin{cor}
The free-probability analog of the hyperbolic cosine characteristic function $\phi_C$ has the following Voiculescu transform
\begin{equation}
V_{\tilde{\phi_C}}(it)=i[1-t\beta(t/2)], t>0.
\end{equation}
As a by-product we infer the following identity for the function $\beta$:
\[
 \beta(s)+ \beta(s+ 1)= 1/s,  \ \ \  s>0.
\]
\end{cor}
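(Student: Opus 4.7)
The plan is to apply formula~(A) of Theorem~1 directly to the hyperbolic cosine characteristic function $\phi_C(s)=1/\cosh(s)$, reducing the first claim (21) to the computation of the Laplace transform of $\log\cosh(s)$. The functional identity $\beta(s)+\beta(s+1)=1/s$ will then be deduced from the series representation of $\beta$ in~(vii); it also emerges, as the paragraph preceding the corollary notes, by matching the two forms of $V_{\tilde{\phi_C}}$ produced through the $A$- and $Z$-mappings, cf.~(10) and~(11).

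Since $\phi_C$ is real and positive, the conjugation in~(A) is innocuous and it suffices to evaluate
\[
I(t):=\int_0^{\infty}\log\cosh(s)\,e^{-ts}\,ds, \qquad t>0.
\]
An integration by parts (the boundary contributions vanish because $\log\cosh(0)=0$ and $\log\cosh(s)=O(s)$ while $e^{-ts}$ decays exponentially for $t>0$) gives $I(t)=t^{-1}\int_0^{\infty}\tanh(s)\,e^{-ts}\,ds$. I would then use the expansion
\[
\tanh(s)=1-\frac{2e^{-2s}}{1+e^{-2s}}=1-2\sum_{n\ge 1}(-1)^{n-1}e^{-2ns},
\]
integrating term by term after first subtracting the limit~$1$ so that the remainder is exponentially small at infinity and dominated by a summable majorant on $(0,\infty)$. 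By Fubini,
\[
\int_0^{\infty}\tanh(s)\,e^{-ts}\,ds=\frac{1}{t}-2\sum_{n\ge 1}\frac{(-1)^{n-1}}{t+2n}=\frac{1}{t}-\beta(t/2+1),
\]
where the last equality comes from the reindexing $n=k+1$ matched against the series~(vii). Hence $I(t)=1/t^2-\beta(t/2+1)/t$ and, by~(A),
\[
V_{\tilde{\phi_C}}(it)=-it^2\,I(t)=i\,[\,t\,\beta(t/2+1)-1\,].
\]

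To reach the stated form $i[1-t\beta(t/2)]$ of~(21), I would prove the identity $\beta(s)+\beta(s+1)=1/s$ independently from~(vii):
\[
\beta(s)+\beta(s+1)=\sum_{k\ge 0}\frac{(-1)^k}{s+k}+\sum_{k\ge 0}\frac{(-1)^k}{s+1+k},
\]
and shifting the second sum by $j=k+1$ makes all terms cancel pairwise except the $k=0$ contribution of the first sum, leaving $1/s$. Substituting $\beta(t/2+1)=2/t-\beta(t/2)$ into the expression above for $V_{\tilde{\phi_C}}(it)$ yields~(21). Conversely, had one computed $V_{\tilde{\phi_C}}$ by the $Z$-route from the Khintchine measure $m_C$ in~(20), the symmetric form in Theorem~1 would produce a representation of $V_{\tilde{\phi_C}}(it)$ involving $\beta(t/2)$; equating the two yields the functional identity \emph{for free}, which is the by-product aspect the corollary highlights.

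The main obstacle is the term-by-term Laplace integration, since $\tanh(s)\not\to 0$ at infinity and the geometric series for $\tanh$ is not absolutely integrable against $e^{-ts}$ as stated; the trick of subtracting the limit $1$ first and applying Fubini only to the exponentially small remainder is what unlocks the calculation. Everything else is algebraic bookkeeping between the series in~(vii), the partial-fraction identity $1/(t+2n)=(1/2)\cdot 1/(t/2+n)$, and the telescoping proof of the functional identity for $\beta$.
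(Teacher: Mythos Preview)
Your argument is correct, but it differs from the paper's in two notable respects. First, the paper does not compute the Laplace transform of $\log\cosh$ from scratch: it simply quotes the table identity \textbf{4.342(2)} in [6], namely $\int_0^{\infty}e^{-\xi x}\ln(\cosh x)\,dx=\xi^{-1}[\beta(\xi/2)-1/\xi]$, which lands \emph{directly} on the form $i[1-t\beta(t/2)]$. Your integration-by-parts plus geometric-series route instead yields $i[t\,\beta(t/2+1)-1]$, which is precisely the expression the paper obtains from its \emph{second} proof via the $Z$-level and the Khintchine measure $m_C$ (using \textbf{3.522(2)} in [6]). Second, and consequently, the paper deduces the identity $\beta(s)+\beta(s+1)=1/s$ as a genuine by-product of matching the $A$- and $Z$-level evaluations, whereas you prove it independently by telescoping the series~(vii) and then use it to pass between the two forms. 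Your approach is more self-contained (no reliance on integral tables) and gives an honest proof of the $\beta$ identity; the paper's approach better illustrates its own theme (Remark~3) that the $A$/$Z$ comparison can manufacture such identities without prior knowledge of them.
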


\begin{proof}
(First proof.)
 
\noindent For first proof one needs the identity:
\begin{equation} 
\int_0^{\infty} e^{-\xi x}\ln (\cosh x)dx=
\frac{1}{\xi}\,[\,\beta(\xi /2)- 1/ \xi\,] , \ \Re \xi >0;
\mbox{cf. \textbf{4.342(2)}}\  in \, [6].
\end{equation}
Hence  and Theorem 1, level (A),
\begin{multline*}
V_{\tilde{\phi_C}}(it)=it^2\int_0^{\infty}\log \phi_C(-v)e^{- t v}dv=
 - it^2\int_0^{\infty}\log \cosh(v)e^{- t v}dv \\
= -it^2(\frac{1}{t}\,(\,\beta(t /2)- 1/t \,))= i \, [1- t \beta(t/2)], \ \ \ t>0;
\end{multline*}
which completes the calculation of (21).

(Second proof.) 

\noindent This time the integral identity needed below is as follows:
\begin{equation}
\int_0^{\infty} \frac{x\, dx}{(b^2+x^2)\ \sinh (\pi x)}= \frac{1}{2b} -\beta(b+1), \  b>0; \  \ \mbox{cf. \textbf{3.522(2)} \  in \, [6]}.
\end{equation}
 From first line in (20) we have that $\phi_C$ has finite Khintchine measure 
$$
m_C(dx)= \,\frac{1}{2}\,\frac{|x|}{1+x^2}\,\frac{1}{\sinh (\pi |x|/2)}\,dx $$
Consequently, from Theorem 1 we get 
\begin{multline*}
V_{\tilde{\phi_C}}(it)=-it\,\int_{\Rset}\frac{1+x^2}{t^2+x^2} \frac{1}{2}\,\frac{|x|}{1+x^2}\,\frac{1}{\sinh (\pi\,|x|/2)}dx \\
= -it\int_0^{\infty}\frac{x}{t^2+x^2}\frac{1}{\sinh (\pi x/2)}dx 
= -it\,\int_0^{\infty}\frac{y}{(t/2)^2+y^2}\frac{1}{\sinh \pi y}dy\\= -it [\frac{1}{t}-\beta(\frac{t}{2}+1)]
= i \,[t\,\beta(\frac{t}{2}+1)- 1] , \ \ \ \mbox{for}\ \  t>0.
\end{multline*}
From those two proofs we must have 
\begin{equation}
t\beta(\frac{t}{2}+1)-1= 1 -t\beta(\frac{t}{2}) \  \ \mbox{or} \ \ \beta(s)+ \beta(s+ 1)= 1/s;
\end{equation}
and this completes the proof of Corollary  2.
\end{proof}

\begin{rem} 
\emph{The identity (24) also follows from the fact that}

$\beta(t)=\int_0^{\infty}(1+e^{-x})^{-1}e^{-tx} dx$, t>0; 
\emph{ cf. \textbf{8.371(2)} in [6]} or Section 0, (ix).
\end{rem}

\medskip
\medskip
\textbf{3.2. Hyperbolic-sine variable.}

Let $S$ stands for  \emph{the standard hyperbolic-sine variable}, that is,  the random variable with the characteristic  function
\begin{multline}
\phi_{S}(t):=\frac{t}{\sinh t}
=\exp\int_{\Rset}(\cos(tx)-1)\frac{1+x^2}{x^2}\,[\,\frac{1}{2}\,\frac{|x|}{1+x^2}\,\frac{e^{-\pi|x|/2}}{\sinh (\pi |x|/2)}\,]\,dx\\=\exp\int_{\Rset}(\cos(tx)-1)\,[\frac{e^{-\pi |x|/2}}{2 |x| \sinh (\pi |x|/2)}]\,(dx),
\end{multline}
where in the first bracket $[...]$ is the density of the (Khintchine) finite measure $m_S$ corresponding to $\phi_S$ in (1)  and in the second one, is the density of  the sigma-finite L\'evy (spectral) measure $M_S$ in (1a).
\begin{cor}
The free-probability analog $\tilde{\phi_S}$ of the hyperbolic sine characteristic function $\phi_S$ has the following Voiculescu transform
\begin{equation}
V_{\tilde{\phi_S}}(it)=i[t\psi(t/2)-t \ln(t/2)+1], t>0.
\end{equation}
\end{cor}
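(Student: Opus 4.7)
My plan is to use Theorem~1 on the level (A), since the Laplace transform of $\log\phi_S$ is cleaner than the $m_S$-integral on the level (Z). Because $\phi_S(t)=t/\sinh t$ is real, $\overline{\log\phi_S(s)}=\log\phi_S(s)$, so Theorem~1 gives
$$
V_{\tilde{\phi_S}}(it)=it^2\int_0^\infty \log\!\frac{s}{\sinh s}\,e^{-ts}\,ds,\qquad t>0.
$$
Then I would rewrite the integrand using the factorization $\sinh s=\tfrac12 e^{s}(1-e^{-2s})$, getting
$$
\log\frac{s}{\sinh s}=\log s-s+\log 2-\log(1-e^{-2s}),
$$
which splits the job into four elementary Laplace transforms.

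The first three pieces are standard: $\int_0^\infty(\log s)e^{-ts}ds=-(\gamma+\ln t)/t$, $\int_0^\infty s\,e^{-ts}ds=1/t^2$, and $\int_0^\infty e^{-ts}ds=1/t$. The non-trivial piece is $J(t):=\int_0^\infty \log(1-e^{-2s})\,e^{-ts}\,ds$. The substitution $u=e^{-2s}$ turns it into a beta-type integral
$$
J(t)=\tfrac12\int_0^1 \log(1-u)\,u^{t/2-1}\,du,
$$
and expanding $\log(1-u)=-\sum_{n\ge 1}u^n/n$, integrating termwise, and using the telescoping $\sum_{n\ge 1}\tfrac{1}{n(n+a)}=\tfrac{1}{a}(\psi(a+1)+\gamma)$ with $a=t/2$ produces $J(t)=-\big(\psi(t/2+1)+\gamma\big)/t$. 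Applying the recurrence $\psi(t/2+1)=\psi(t/2)+2/t$ (formula \textbf{8.365} of \cite{}, which here appears as property (i)(b) above) rewrites this as $J(t)=-(\psi(t/2)+\gamma)/t-2/t^2$.

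Assembling the four pieces, the $\gamma$ terms cancel and the $\log 2$ terms combine with $-\ln t$ into $-\ln(t/2)$:
$$
\int_0^\infty\log\frac{s}{\sinh s}\,e^{-ts}\,ds=\frac{\psi(t/2)-\ln(t/2)}{t}+\frac{1}{t^2}.
$$
Multiplying by $it^2$ yields exactly $V_{\tilde{\phi_S}}(it)=i\big[t\psi(t/2)-t\ln(t/2)+1\big]$, which is (26).

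I expect the main obstacle to be the bookkeeping: the integrals $\int_0^\infty (\log s)e^{-ts}ds$ and $J(t)$ individually carry both $\gamma$ and $\ln t$ terms that would be mutually inconsistent with the stated answer, and one must trust the cancellation. A cleaner alternative, which I would mention as a sanity check, is to start from the L\'evy--Khintchine representation of $\phi_S$ in (25) and apply the ``symmetric $m$'' form of Theorem~1, namely $V_{\tilde{\phi_S}}(it)=-it\int_\R \tfrac{1+x^2}{t^2+x^2}\,m_S(dx)=-it\int_0^\infty \tfrac{x\,e^{-\pi x/2}}{(t^2+x^2)\sinh(\pi x/2)}\,dx$, and then invoke a Gradshteyn--Ryzhik-type integral identity for $\psi$; but the Laplace-transform route above is more self-contained and I would use it as the primary argument.
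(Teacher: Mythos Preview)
Your proof is correct. The primary argument follows exactly the paper's first proof: apply Theorem~1 at level~(A) and evaluate the Laplace transform of $\log\phi_S$. The only difference is that the paper simply quotes the Gradshteyn--Ryzhik identity \textbf{4.342(3)},
\[
\int_0^{\infty} e^{-\xi x}\bigl(\ln \sinh x - \ln x\bigr)\,dx=\frac{1}{\xi}\Bigl[\ln(\xi/2) - \frac{1}{\xi}-\psi(\xi/2)\Bigr],
\]
whereas you derive this identity from scratch via the factorization $\sinh s=\tfrac12 e^{s}(1-e^{-2s})$ and a termwise-integrated power series for $\log(1-e^{-2s})$. Your route is more self-contained and shows explicitly why the Euler constant $\gamma$ cancels; the paper's route is shorter but relies on the tables. (One small quibble: the recurrence $\psi(z+1)=\psi(z)+1/z$ is not what appears under (i)(b) in Section~3.0 --- that item is merely the definition of the digamma --- but the recurrence is of course standard.) The alternative you sketch as a sanity check, namely the level-(Z) computation $-it\int_0^\infty \frac{x\,e^{-\pi x/2}}{(t^2+x^2)\sinh(\pi x/2)}\,dx$, is precisely the paper's second proof, which it completes by citing Gradshteyn--Ryzhik \textbf{3.415(1)}.
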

\begin{proof}
(First proof.) The key integral identity for (26) is the following one:
$$
\int_0^{\infty} e^{-\xi x}(\ln (\sinh x) - \ln x)dx=
\frac{1}{\xi}[ \ln(\xi/2) - 1/\xi-\psi(\xi/2)] , \ \ \ \Re \xi >0; 
$$
cf. \textbf{4.342(3)} in [6]. 

\noindent (NOTE the misprint in [6]; comp.  $www.mathtable.com/errata/gr6_errata.pdf$).

Hence and from Theorem 1, equality (A), we get
\begin{multline}
V_{\tilde{\phi_S}}(it)= i t^2\int_0^{\infty}\log\phi_S(-v)e^{-tv}dv
= - it^2\,\int_0^{\infty}[\log \sinh v - \log v]\,e^{- tv}dv\\
= -it^2 \frac{1}{t}[ \ln(t/2) - 1/t-\psi(t/2)] 
= i [t \psi(\frac{t}{2}) - t \ln \frac{t}{2}+1] ,\ \ \ 
\end{multline}
which proves (26).

(Second proof.)  This time we need  the formula
$$
\int_0^{\infty}\frac{ x\,dx}{(x^2+\beta^2)(e^{\mu x}-1)}=\frac{1}{2}\big[\log(\frac{\beta\mu}{2\pi})-\frac{\pi}{\beta\mu}-\psi(\frac{\beta\mu}{2 \pi}) \big], \, \Re \beta>0, \ \Re \mu>0;
$$
cf. \ \textbf{3.415(1)}  in [6].

From first line in (25) we have that the Khintchine (finite) measure $m_S$ (for $\phi_S$)  is equal to
$$
m_S(dx)=\frac{1}{2}\,\frac{|x|}{1+x^2}\,\frac{e^{-\pi|x|/2}}{\sinh (\pi |x|/2)}\,dx=\frac{|x|}{1+x^2}\frac{1}{1-e^{-\pi |x|}}dx
$$
(also Jurek-Yor (2004), cf. [16].) Thus the above indentity and  Theorem 1, equality (Z), give 
\begin{multline}
V_{\tilde{\phi_S}}(it) 
=- 2it\,\int_0^{\infty}\frac{x}{t^2+x^2}\,\frac{1}{e^{\pi x}-1}\,dx= -it\big[\ln(\frac{t}{2})-\frac{1}{t}-\psi(\frac{t}{2})\big]\\ =i[t\,\psi(t/2)-t \log (t/2)+1],\ \ t>0,  \ \ \ \ \ 
\end{multline}
which coincides with  (26). This completes  a proof of Corollary 3.
\end{proof}

\medskip
\textbf{3.3. The hyperbolic-tangent variable.}

Let $T$ stands for  \emph{the standard hyperbolic-tangent variable}, that is,  the random variable with the characteristic  function $\phi_T(t)=\frac{\tanh t}{t}$.  It's Khintchine representation is as follows:
\begin{equation}
\phi_T(t)=\frac{\tanh t}{t}= \exp(\int_{-\infty}^{\infty}(\cos tx-1)\,[\,\frac{1}{2}\frac{|x|}{1+x^2}\frac{e^{-\pi |x|/4}}{\cosh ( \pi |x|/4)}\,]dx; 
\end{equation}
where in the bracket $[...]$ there is the density of the finite Khintchine measue $m_T$ from the formula (1).
\begin{cor}
The free-probability  analog $\tilde{\phi_T}$ of hyperbolic tangent characteristic function has the following Voiculescu transform
\begin{equation}
V_{\tilde{\phi_T}}(it)=  it\, [\,\ln(\frac{t}{2})-\beta(\frac{t}{2})-\psi(\frac{t}{2})\,] =it\,[ \ln (\frac{t}{4})- \psi(\frac{t}{4}+\frac{1}{2})],  \ \ \  t>0.
\end{equation}
Consequently, we get the identity for Euler's  function
\begin{equation}
2\,\psi(2s) - \psi (s)- \psi(s+1/2)= 2 \ln 2  \ \ \  s>0.
\end{equation}
\end{cor}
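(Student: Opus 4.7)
The plan is to compute $V_{\tilde{\phi_T}}(it)$ independently via the level-$A$ and the level-$Z$ mappings of Theorem 1 and to extract both the second form in (30) and the by-product identity (31) by comparing the two closed forms. For the level-$A$ computation I start from the elementary factorisation $\phi_T(t)=\tanh t/t=\sinh t/(t\cosh t)=\phi_C(t)/\phi_S(t)$, so that $\log\phi_T=\log\phi_C-\log\phi_S$. Since formula $(A)$ of Theorem 1 is linear in $\log\phi_\mu$ (it is a Laplace transform rescaled by $it^2$), this gives $V_{\tilde{\phi_T}}(it)=V_{\tilde{\phi_C}}(it)-V_{\tilde{\phi_S}}(it)$; substituting (21) and (26), the additive $\pm i$'s cancel and I read off the first equality of (30), namely $V_{\tilde{\phi_T}}(it)=it[\ln(t/2)-\beta(t/2)-\psi(t/2)]$.

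For the level-$Z$ computation I feed the Khintchine density $m_T(dx)=\tfrac12|x|(1+x^2)^{-1}e^{-\pi|x|/4}/\cosh(\pi|x|/4)\,dx$ from (29) into the symmetric formula of Theorem 1 and simplify via $e^{-\pi x/4}/\cosh(\pi x/4)=2/(e^{\pi x/2}+1)$; after the rescaling $y=\pi x/2$ the task reduces to evaluating $\int_0^\infty y\,dy/[(y^2+(\pi t/2)^2)(e^y+1)]$. I split $1/(e^y+1)=1/(e^y-1)-2/(e^{2y}-1)$ and apply the Gradshteyn--Ryzhik formula \textbf{3.415(1)} (cited in the proof of Corollary 3) with $\mu=1$ and $\mu=2$; after the $\pi/a$ poles cancel and the remaining logarithms and digammas are collected, the integral equals $\psi(t/2)-\tfrac12\psi(t/4)-\tfrac12\ln t$, hence $V_{\tilde{\phi_T}}(it)=-2it[\psi(t/2)-\tfrac12\psi(t/4)-\tfrac12\ln t]$.

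Equating the two expressions for $V_{\tilde{\phi_T}}(it)$ and cancelling $it$, then replacing $\beta(t/2)$ by $\tfrac12[\psi(t/4+1/2)-\psi(t/4)]$ via (vii) and rearranging, yields $\psi(t/2)=\tfrac12[\psi(t/4)+\psi(t/4+1/2)]+\ln 2$; this is exactly (31) with $s=t/4$, i.e.\ the Gauss duplication formula (iii) for the digamma. Plugging this identity back into the first form of $V_{\tilde{\phi_T}}(it)$, once more using (vii) to eliminate $\beta$, collapses the $\psi(t/4)$-contributions and folds $\ln 2$ into $\ln(t/2)$, producing the second form $it[\ln(t/4)-\psi(t/4+1/2)]$.

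The main obstacle is the level-$Z$ integral: two invocations of \textbf{3.415(1)} generate six terms (two logarithms, two digammas and two $\pi/a$ pole contributions), and I must verify that the poles cancel exactly and that the surviving four terms combine to precisely $\psi(t/2)-\tfrac12\psi(t/4)-\tfrac12\ln t$. Once this evaluation is secured, everything else is routine manipulation with the definition (vii) of $\beta$ and the duplication formula (iii).
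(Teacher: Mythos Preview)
Your proposal is correct. The level-$A$ computation is identical to the paper's first proof: both use $\phi_T=\phi_C/\phi_S$, linearity of (A), and Corollaries 2 and 3 to obtain the first equality in (30).

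The level-$Z$ computation differs. The paper quotes the table formula \textbf{3.415(3)},
\[
\int_0^\infty\frac{x\,dx}{(x^2+\beta^2)(e^{\mu x}+1)}=\tfrac12\Big[\psi\big(\tfrac{\beta\mu}{2\pi}+\tfrac12\big)-\ln\tfrac{\beta\mu}{2\pi}\Big],
\]
which, with $\beta=t$ and $\mu=\pi/2$, gives the second equality $it[\ln(t/4)-\psi(t/4+1/2)]$ in one stroke; equating it with the first equality then yields (31). You instead decompose $1/(e^y+1)=1/(e^y-1)-2/(e^{2y}-1)$ and apply \textbf{3.415(1)} twice, arriving at the intermediate closed form $it[\ln t-2\psi(t/2)+\psi(t/4)]$; comparing this with the level-$A$ expression (and expanding $\beta(t/2)$ via (vii)) produces the duplication identity (31), which you then feed back to obtain the second equality of (30). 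Your route is slightly longer and the order of deductions is inverted relative to the paper, but it has the virtue of reusing only the integral formula \textbf{3.415(1)} already cited in Corollary~3, rather than introducing a new table entry. The ``main obstacle'' you flag---the cancellation of the $\pi/(\beta\mu)$ pole terms---is genuine but easy: the $-2/t$ from $\mu=1$ is exactly offset by $-2\cdot(-1/t)$ from $\mu=2$, and the remaining four log/digamma terms do combine to $\psi(t/2)-\tfrac12\psi(t/4)-\tfrac12\ln t$ as you state.
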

\begin{proof} (First proof.)

From the equality $\phi_C(t)=\phi_S(t)\cdot \phi_T(t)$, Theorem 1, (A),  and Corollaries 2 and 3  we get
\begin{multline*}
V_{\tilde{\phi_T}}(it)=it^2\int_0^{\infty}[\,\overline{\log\phi_C(t)}- \overline{\log\phi_S(t)}\,]e^{-ts}ds=V_{\tilde{\phi_C}}(it)-V_{\tilde{\phi_S}}(it) \\= 
 i[1-t\beta(t/2)]   -i[t\,\psi(t/2)-t \log (t/2)+1]=it [\ln(t/2)-\beta(t/2)-\psi(t/2)],
\end{multline*}
which  gives the first equality in (30).

(Second proof.) 

This time we need the formula \textbf{3.415(3)} in [6], that is, 
\[
\int_0^\infty\frac{x}{(x^2+\beta^2)(e^{\mu x}+1)}dx=\frac{1}{2}\Big[\psi(\frac{\beta\mu}{2\pi}+\frac{1}{2})-
\ln (\frac{\beta\mu}{2\pi})\Big], \ \Re\beta>0, \  \Re\mu>0.
\]
Since  $1-\tanh x= \frac{e^{-x}}{\cosh  x}=\frac{2}{e^{2x}+1} $,  using the above and Theorem 1, equality (Z), 
 we have
\begin{multline}
V_{\tilde{\phi_T}} (it)= - it\,\int_{\Rset}\frac{1+x^2}{t^2+x^2}\frac{1}{2}\frac{|x|}{1+x^2}\frac{e^{-\pi |x|/4}}{\cosh ( \pi |x|/4)}\,dx \\= 
- 2 it\int_0^\infty \frac{x}{(t^2+x^2)(e^{\pi x/2}+1)}\,dx = - 2it \frac{1}{2}\Big[\psi(\frac{t \pi/2}{2\pi}+\frac{1}{2})-
\ln (\frac{t \pi/2}{2\pi})\Big] \\
= it \Big[\ln (\frac{t}{4}) -  \psi(\frac{t}{4}+\frac{1}{2}) \Big], \qquad \qquad \qquad
\end{multline} 
that is the second equality in (30). Consequently, by Theorem 1, 

$
\ln (\frac{t}{4})-\psi(\frac{t}{4}+\frac{1}{2})=\ln(t/2)-\beta(t/2)-\psi(t/2),   \ \ \  t>0, $
or  equivalently,
\begin{equation}
\psi(2s) -1/2 \psi (s)- 1/2 \psi(s+1/2)= \ln 2  \ \ \  s>0.
\end{equation}
which completes the proof.
\end{proof}

\begin{rem}
\emph{ (a) \ Note that the above identity (33) concides with the formula \textbf{8.365(6)}  in [6], for $n=2$, in [6]. (cf. also Section 3.0, formula (iii)). 
 \newline  (b) \ By reasoning as in the seconds proofs of Corollaries 2 and 3, and using (32),  we get for $t>0$
\begin{equation*}
\int_0^{\infty}\frac{x}{t^2+x^2}(1-\tanh (\pi x))dx=\psi(2t)+\beta(2t)-\log(2t)= \psi(t+\frac{1}{2})-\log t . 
\end{equation*}
\newline (c) In particular, $\int_0^{\infty}\frac{x}{1+x^2}(1-\tanh (\pi x))dx=\psi(3/2)$.}
\end{rem}

\medskip
\medskip
\textbf{4. FREE - PROBABILITY ANALOGUES OF  BACKGROUND DRIVING FUNCTIONALS OF HYPERBOLIC DISTRIBUTIONS.} 

Since  three hyperbolic characteristic functions $\phi_C, \phi_S$ and $\phi_T$  (of the random variables C, S and T) are selfdecomosable (in oder words, in L\'evy class L) therefore they admit infinitey divisible background driving characteristic functions (BDCF) $\psi_C, \psi_S$ and $\psi_T$, respectively. Further, if $N_C$, $N_S$ and $N_T$ are their L\'evy spectral measures then
\begin{equation}
\psi_C(t)=\exp[- t\tanh t]\,;  \ N_C(dx)=\frac{\pi}{4}\,\frac{\cosh ((\pi x)/2)}{\sinh^2((\pi x)/2)} dx  \  \mbox{on}\  \Rset \setminus\{0\};
\end{equation}
\begin{equation}
\psi_S(t)=\exp[1-t\coth t]; \ \ N_S(dx)=\frac{\pi}{4}\,\frac{1}{\sinh^2((\pi x)/2)} dx \ \mbox{on}\  \Rset \setminus\{0\}
\end{equation}
\begin{equation}
\psi_T(t)=\exp\big[ \frac{2t}{\sinh (2t)}-1\big]; \  N_T(dx)=\frac{\pi}{8}\frac{1}{\cosh^2(\pi\,x/4)}\, dx \   \mbox{on}\  \Rset \setminus\{0\};
\end{equation}
 Jurek-Yor (2004), cf. [16].
 \begin{rem}
 \emph{(a) Note that $\psi_T$ is a characteristic function of a compound Poisson distribution.
\newline
 (b) Elementary calculations give  $\psi_C/\psi_S=\psi_T$.}  
\end{rem}
Let $\tilde{\psi_C}, \tilde{\psi_S} $ and  $\tilde{\psi_T}$ be the free-probability analogues of  BDCF for  $\psi_C, \psi_S, \psi_T$, respectively.  Note that in those cases we have three possible ways of finding them: two because of the  levels  A and Z  from Theorem 1 and, if possible, the third one by the differential equation (16) in Theorem 2.
\begin{cor}
Let $\tilde{\psi_C}, \tilde{\psi_S} $ and  $\tilde{\psi_T}$ be the free-analogues the corresponding BDCF. Then their Voiculescu transforms are as follows:
\begin{equation}
 (a) \, \ V_{\tilde{\psi_C}}(it)=i\,[\,t^2/2 \,\zeta(2, t/2)- t^2/4\zeta(2, t/4)+1\,]
\end{equation}
\begin{equation}
(b) \ \ V_{\tilde{\psi_S}}(it)= i[1+t- (1/2) \, t^2\zeta(2, \, t/2)].
\end{equation}
\begin{equation}
(c) \ \ V_{\tilde{\psi_T}}(it)=it[ t\zeta(2, t/2)-t/4 \zeta(2, t/4)-1]=it [t/4 \zeta(2, (t+2)/4)-1]
\end{equation}
\end{cor}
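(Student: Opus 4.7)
The plan is to apply Theorem 2 to each of the free-analogues $V_{\tilde{\phi_C}}$, $V_{\tilde{\phi_S}}$, $V_{\tilde{\phi_T}}$ already computed in (10), (26), and (30), using the identity $\psi^{\prime}(z) = \zeta(2,z)$ (Section 3.0 (ii) with $n=1$) and formula (viii) for $\beta^{\prime}$ to convert derivatives into Hurwitz zeta values. Part (a) is immediate: it coincides with formula (18), which was obtained precisely by substituting (10) into (16), so no new calculation is needed.

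For part (b), I would insert $V_{\tilde{\phi_S}}(it) = i[\,t\psi(t/2) - t\ln(t/2) + 1\,]$ from (26) into (16). A single differentiation produces $t\frac{d}{dt}V_{\tilde{\phi_S}}(it) = it\bigl[\psi(t/2) + (t/2)\psi^{\prime}(t/2) - \ln(t/2) - 1\bigr]$, after which the $\psi$- and $\ln$-terms cancel in $V_{\tilde{\phi_S}}(it) - t\frac{d}{dt}V_{\tilde{\phi_S}}(it)$, leaving $i\bigl[1 + t - (t^2/2)\psi^{\prime}(t/2)\bigr]$. Replacing $\psi^{\prime}$ by $\zeta(2,\cdot)$ yields (b).

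For part (c) I would exploit Remark 7(b), $\psi_T = \psi_C/\psi_S$, together with the linearity of $\phi \mapsto V_{\tilde{\phi}}(it)$ at level (A) of Theorem 1, to write $V_{\tilde{\psi_T}} = V_{\tilde{\psi_C}} - V_{\tilde{\psi_S}}$. Subtracting (a) from (b) and collecting terms gives the first equality of (c). For the second equality, I would specialize identity (vi) of Section 3.0 at parameter $t/2$, which reads $\zeta(2,t/2) - \tfrac{1}{4}\zeta(2,t/4) = \tfrac{1}{4}\zeta(2,(t+2)/4)$, so that the bracket in the first form collapses to $(t/4)\zeta(2,(t+2)/4) - 1$. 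As an independent check, applying Theorem 2 directly to (30) gives $V_{\tilde{\psi_T}}(it) = it\bigl[(t/4)\psi^{\prime}(t/4 + 1/2) - 1\bigr]$, which matches the second form in (c) since $t/4 + 1/2 = (t+2)/4$.

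The computations are mechanical once the three tools are aligned. The only bookkeeping hazard is tracking the factors of $i$ and sign flips while differentiating the products $t \cdot \psi(t/2)$, $t \cdot \ln(t/2)$, and $t \cdot \psi(t/4 + 1/2)$ that make up each $V_{\tilde{\phi}}$. The agreement between the two routes in (c) is a built-in consistency check and, in the spirit of Remark 4, recovers identity (vi) of Section 3.0 as a free-probability by-product.
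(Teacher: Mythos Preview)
Your proof is correct. For parts (a) and (c) you take essentially the same route as the paper: (a) is indeed the computation already recorded as (18), and for (c) the paper likewise invokes Remark~7(b) to write $V_{\tilde{\psi_T}}=V_{\tilde{\psi_C}}-V_{\tilde{\psi_S}}$ and subtracts (note your phrase ``subtracting (a) from (b)'' is backwards, though your formula $V_{\tilde{\psi_T}}=V_{\tilde{\psi_C}}-V_{\tilde{\psi_S}}$ is the right one). The genuine difference is in part (b): the paper works at level (A) of Theorem~1, computing the Laplace transform $\int_0^\infty v\coth v\,e^{-tv}\,dv$ directly via the table identity \textbf{3.551(3)} in [6], whereas you apply the differential equation (16) of Theorem~2 to the already-known $V_{\tilde{\phi_S}}$ from (26). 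Your route is arguably cleaner here --- it avoids an external integral identity and stays entirely within the framework the paper has built --- while the paper's route has the virtue of giving an independent derivation that does not rely on Corollary~3. The paper in fact acknowledges your approach as an alternative in its closing parenthetical remark.
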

\begin{proof}
(a) From Theorem 1, using the  \emph{Mathematica} and formulas (v) ad (vi) from Section 3.0 we get
\begin{multline*}
V_{\tilde{\psi_C}}(it)=-it^2\int_0^{\infty}v \tanh v e^{-tv}dv
= -it^2\,[\,\frac{1}{8}\,\zeta(2,\frac{t}{4}+1)-\,\frac{1}{8}\,\zeta(2,\frac{t}{4}+\frac{1}{2})+\frac{1}{t^2}\,] \\
=-it^2[1/4 \zeta(2,t/4)-1/2 \zeta(2,t/2)-1/t^2]= i [t^2/2\zeta(2,t/2)-1/4t^2\zeta(2, t/4) + 1].
\end{multline*}
(Note that  equality (a), by a different way (Theorem 2), was already computed in  Illustration 2.3,  formula (18).)

\medskip
(b) From the equality  \textbf{3.551(3)} in [6]:
\[
\int_0^\infty x^{\mu-1}e^{-\beta x}\coth x dx= \Gamma(\mu)\,[2^{1-\mu}\zeta(\mu,\beta/2)-\beta^{-\mu}], \Re \mu>1, \Re \beta>0; 
\]
Putting  $\mu=2$ and $\beta=t$ we get
\begin{multline*}
V_{\tilde{\psi_S}}(it)=\,it^2\int_0^{\infty}(1-v \coth v) e^{-tv}dv= it^2[t^{-1}-\int_0^{\infty}v \coth v e^{-vt}dv]\\
= it^2[t^{-1}-\frac{1}{2}\zeta(2,\frac{t}{2})+\frac{1}{t^2}]=i[t-\frac{1}{2}t^2(\zeta(2;\frac{t}{2})+1]=\,i\,[1+t-\frac{1}{2}t^2\zeta(2;\frac{t}{2})\,];
\end{multline*}
that gives (b).

\medskip
(c)  By Remark  7(b),  $ \log \psi_T=\log \psi_C-\log \psi_S$.  Thus  using (a) and (b) we get
\begin{multline*}
V_{\tilde{\psi_T}}(it)= - it^2\int_0^\infty [\log \psi_C(v)-\log\psi_S(v)]ve^{-tv}dv \\
i\,\big[\,(t^2/2) \,\zeta(2, t/2)- (t^2/4)\,\zeta(2, t/4)+1\, \big] -  i[1+t- (1/2) \, t^2\zeta(2, \,(t/2)]\\
= i [ t^2\zeta(2, t/2) -t^2/4\zeta(2,t/4)-t ]=it[t\zeta(2, t/2)-t/4 \zeta(2,t/4)-1]\\
= it\,[ \frac{t}{4} \,\zeta(2, \frac{t+2}{4})-1], \qquad \qquad \qquad \qquad  
\end{multline*}
which gives (c).

(An alternative proof is possible by using the differential equation from Theorem 2 and the formulas in Corollary 5.)
\end{proof}

\medskip
\medskip
\textbf{5. \ FREE LAPLACE (or FREE DOUBLE EXPONENTIAL) MEASURE.}

All  the hyperbolic characteristic functions $\phi_C, \phi_S, \phi_T$, discussed in the previous sections,  are infinite products of the Laplace (called also double exponential) distributions; Jurek (1996), cf. [9]. Thus we include it in this paper as well.

 \textbf{5.1.} \  Recall that that \emph{the double exponential (2e) (or Laplace) distribution}  has the probability density  $f(x):=2^{-1}e^{-|x|}$, ($x \in \Rset$) and the characteristic function
\begin{multline}
\phi_{2e}(t)=\frac{1}{1+t^2}=\exp\int_{\Rset}(\cos tx-1) \frac{1+x^2}{x^2}[\,
\frac{e^{-|x|}|x|}{1+x^2}]dx \\=\exp\int_{\Rset\setminus \{0\}}(\cos tx-1)[\frac{e^{-|x|}}{|x|}]dx ,
\end{multline}
where in the first square bracket [...] there is the finite Khintchine spectral measure $m_{2e}$ and in the second one there is the L\'evy spectral measure $M_{2e}$. 

\medskip
\begin{cor}
The free analogue $\tilde{\phi_{2e}}$  of the double exponential distribution has the following Voiculescu transform
$$
V_{\Tilde{\phi_{2e}}}(it)=    2 it\,[\,  ci(t)\cos t + si(t)\sin t \,]    \ \ \  \big(= - 2it \int_0^{\infty}\frac{\cos w}{w+t}dw\big) ,  \ \  t>0.
$$ 
\end{cor}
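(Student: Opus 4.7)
The plan is to proceed in exact parallel with the second proofs of Corollaries 2--4: apply Theorem 1 at level (Z) to the symmetric Khintchine pair $[0,m_{2e}]$ that can be read off directly from (40). Since $\phi_{2e}$ is real and $m_{2e}(dx)=\frac{|x|e^{-|x|}}{1+x^2}\,dx$, the $1+x^2$ factors cancel in the symmetric version of (Z) of Theorem 1, leaving
\[
V_{\tilde{\phi_{2e}}}(it)= -it\int_{\R}\frac{1+x^2}{t^2+x^2}\,m_{2e}(dx)= -2it\int_0^\infty \frac{xe^{-x}}{t^2+x^2}\,dx,\qquad t>0.
\]
The whole problem is then reduced to identifying this single real Laplace-type integral with the quantity $-[ci(t)\cos t+si(t)\sin t]$, from which both forms in the statement follow.

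For the bridge to the integral form $\int_0^\infty \frac{\cos w}{w+t}\,dw$, the substitution $u=w+t$ together with the addition formula $\cos(u-t)=\cos u\cos t+\sin u\sin t$ gives, using (x) in Section 3.0,
\[
\int_0^\infty \frac{\cos w}{w+t}\,dw= \cos t\int_t^\infty \frac{\cos u}{u}\,du+\sin t\int_t^\infty \frac{\sin u}{u}\,du= -[ci(t)\cos t+si(t)\sin t].
\]
So it suffices to show that $\int_0^\infty \frac{xe^{-x}}{t^2+x^2}\,dx$ takes the same value.

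For that evaluation my approach would be to split the rational factor as $\frac{x}{x^2+t^2}=\tfrac12\bigl(\tfrac{1}{x-it}+\tfrac{1}{x+it}\bigr)$, so that each half is a Laplace transform of a shifted reciprocal. A contour shift (or analytic continuation from $\Re a>0$) yields $\int_0^\infty \frac{e^{-x}}{x+a}\,dx=e^{a}E_1(a)$ and hence
\[
\int_0^\infty \frac{xe^{-x}}{t^2+x^2}\,dx= \tfrac12\bigl[\,e^{it}E_1(it)+e^{-it}E_1(-it)\,\bigr].
\]
The key input is then the standard identity $E_1(\pm it)=-ci(t)\pm i\,si(t)$ for $t>0$, obtained by deforming the defining contour of $E_1$ from the imaginary axis onto the real axis through a quarter-circle at the origin (the $\mp i\pi/2$ contribution of the arc being exactly the $-\pi/2$ hidden in $si=Si-\pi/2$). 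Substituting and taking real parts collapses the right-hand side to $-[ci(t)\cos t+si(t)\sin t]$, and reinserting the $-2it$ prefactor produces both equalities in the corollary.

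The main obstacle is making the contour-deformation argument for $E_1(\pm it)$ fully rigorous and correctly bookkeeping the $-i\pi/2$ arc term against the $Si-\pi/2$ convention used by the paper. A cleaner presentation (still in the style of Sections~3.1--3.3) would simply invoke the equivalent Gradshteyn--Ryzhik entry $\int_0^\infty \frac{xe^{-\mu x}}{x^2+\beta^2}\,dx=-\cos(\mu\beta)\,ci(\mu\beta)-\sin(\mu\beta)\,si(\mu\beta)$ with $\mu=1,\beta=t$. As an independent cross-check I would also give a level-(A) proof: since $\log\phi_{2e}(s)=-\log(1+s^2)$, one integration by parts transforms $-it^2\int_0^\infty \log(1+s^2)\,e^{-ts}\,ds$ into $-2it\int_0^\infty \frac{s e^{-ts}}{1+s^2}\,ds$, and the rescaling $x=ts$ reproduces the same integral $-2it\int_0^\infty \frac{xe^{-x}}{t^2+x^2}\,dx$, closing the circle.
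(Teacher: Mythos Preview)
Your proposal is correct and lands on the same two computations as the paper, but organizes the work a bit differently. The paper also gives two proofs. Its level-(Z) proof is identical to yours up to the integral $-2it\int_0^\infty \frac{xe^{-x}}{t^2+x^2}\,dx$, but then simply quotes Gradshteyn--Ryzhik \textbf{3.354(2)} (exactly the entry you mention at the end) to identify this with $-[\,ci(t)\cos t+si(t)\sin t\,]$, rather than going through the partial-fraction/$E_1$/contour-deformation route you outline first. Your $E_1$ argument is fine and more self-contained, though as you note the bookkeeping of the $\mp i\pi/2$ arc against the $si=Si-\pi/2$ convention has to be done carefully; citing \textbf{3.354(2)} is the cleaner presentation and is what the paper does. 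For level (A), the paper does \emph{not} integrate by parts; it applies Gradshteyn--Ryzhik \textbf{4.338(1)} directly to $\int_0^\infty e^{-ts}\log(1+s^2)\,ds$ to get the $ci,si$ form, and then uses your same substitution $u=w+t$ plus the cosine addition formula to reach $-2it\int_0^\infty \frac{\cos w}{w+t}\,dw$. Your integration-by-parts reduction of the level-(A) integral back to the level-(Z) integral is a nice alternative that avoids a second table lookup and makes the equivalence of the two levels transparent without appealing to Theorem~1.
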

\emph{Proof.} (First the argument via the equality (A) in Theorem 1 .)

For $\Re \beta >0$ and $\Re \xi>0$ we have the identity 
\[
\int_0^{\infty}e^{-\xi x}\ln(\beta^2+x^2)dx=\frac{2}{\xi}[\ln \beta -ci(\beta \xi)\cos(\beta \xi)-si(\beta\xi)\sin(\beta\xi)], 
\]
where $
si (x) : =  - \int_x^{\infty} \frac{\sin t}{t}dt;  \      ci (x): = - \int_x^{\infty}\frac{\cos t}{t}dt, 
$ are the integral-sine and integral-cosine functions, respectively; cf. \textbf{4.338(1)} in [6] or Section 0, formula (x).

Therefore 
\begin{multline}
V_{\tilde{\phi_{2e}}} (it)= it^2\int_0^{\infty}\overline{\log \phi_{2e}(v)}e^{-tv}dv  =  -it^2\int_0^{\infty}\log(1+v^2)e^{-tv}dv ] \ \  \mbox{(by 4.338(1))} \\
\   -it^2\,[2 t^{-1} (-ci(t)\cos t - si(t)\sin t)]= -2it (\cos t \int_t^{\infty}\frac{\cos x}{x}dx +\sin t \int_t^{\infty}\frac{\sin x}{ x}dx)\\
= -2it\,\,\int_t^{\infty}\frac{\cos t \cos x +\sin t\sin x}{x}\,dx= - 2it\int_t^{\infty}\frac{\cos (x-t)}{x}\,dx \ \ (w:=x-t) \\
 = - 2it \int_0^{\infty}\frac{\cos w}{w+t}dw ,  \ \ \  t>0.\qquad \qquad \qquad
\end{multline}

(Second argument via  formula (Z) in Theorem 1) 

By  second part of Theorem 1 and by \textbf{3.354(2)} in [6] we get
\begin{multline}
V_{\tilde{\phi_{2e}}}(it) = 
-it\,\int_{\Rset}\frac{1+x^2}{t^2+x^2}\,[\,\frac{|x|}{1+x^2}e^{-|x|}\,]\,dx= - 2it\int_0^{\infty}\frac{x}{t^2+x^2} e^{-x}dx= \\
= 2it \,[ ci (t)\cos t +si (t)\sin t]=
 -2it \int_0^{\infty}\frac{\cos w}{w+t}dw, \ \ \
\end{multline}
which coincides with previous calculations.

\medskip
\textbf{5.2.} The Laplace (or double exponential) characateristic function $\phi_{2e}=(1+t^2)^{-1}$ is selfdecomposable.  Therefore  it has the background driving charactristic function $\psi_{2e}$ related to $\phi_{2e}$ via (13). Hence
\begin{multline}
\psi_{2e}(t)= \exp[t \frac{\phi_{2e}^{\prime}(t)}{\phi_{2e}(t)}]   =\exp(- \frac{2t^2}{1+t^2})= \exp 2(\frac{1}{1+t^2}-1 ) \  \mbox{(compund Poisson)} \\ =  \exp 2(\int_{\Rset}(e^{itx}-1)\frac{1}{2}e^{-|x|}dx= \exp \int_{\Rset} (e^{itx}-1- \frac{itx}{1+x^2})e^{-|x|}dx;
\end{multline}
from which we infer that $m_{\psi_{2e}}(dx):= \frac{x^2}{1+x^2} e^{-|x|} dx$  is its finite Khintchine measure in (1) . Here is the  Voiculescu transfom of the free analog of 
$\psi_{2e}$.
\begin{cor}
The free analog of BDCF $\psi_{2e}$ has Voiculescu transform $V_{\tilde{\psi_{2e}}}$  given as 
\begin{equation}
V_{\tilde{\psi_{2e}}}(it)= 2it \ [ \ t \big( ci(t)\sin(t)- si(t)\cos(t) \big)-1\ ],
\end{equation}
where $si$ and $ci$ are the integral sine and  cosine functions.
\end{cor}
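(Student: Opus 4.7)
The plan is to invoke the background driving differential equation from Theorem 2, applied to the selfdecomposable Laplace characteristic function $\phi_{2e}$, since $V_{\tilde{\phi_{2e}}}(it)$ is already in hand from Corollary 6. Writing $f(t) := V_{\tilde{\phi_{2e}}}(it) = 2it\,g(t)$ with $g(t) := ci(t)\cos t + si(t)\sin t$, what we must compute is
\[ V_{\tilde{\psi_{2e}}}(it) = f(t) - t f'(t). \]

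A convenient algebraic step is to observe that $f(t) - t f'(t) = 2it\,g(t) - t[\,2i\,g(t) + 2it\,g'(t)\,] = -2it^{2} g'(t)$, so the bare $g(t)$ contributions cancel and everything reduces to a single derivative. For $g'$, I use the elementary identities $ci'(t) = \cos(t)/t$ and $si'(t) = \sin(t)/t$, which are immediate from the definitions in Section 3.0(x). The product rule then gives
\[ g'(t) = \frac{\cos^2 t + \sin^2 t}{t} + si(t)\cos t - ci(t)\sin t = \frac{1}{t} + si(t)\cos t - ci(t)\sin t, \]
where the Pythagorean identity makes the $1/t$-term appear cleanly. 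Substituting and simplifying yields $-2it^2 g'(t) = 2it\,[\,t(ci(t)\sin t - si(t)\cos t) - 1\,]$, which is exactly the formula claimed in the corollary.

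The only real obstacle is bookkeeping: keeping track of the signs when differentiating $ci$ and $si$ (both are defined with a leading minus, so both derivatives come out positive) and grouping the trigonometric factors so that $\cos^{2} + \sin^{2} = 1$ is visible in $g'(t)$. As an independent sanity check one could instead apply the level-Z formula of Theorem 1 to the symmetric finite Khintchine measure $m_{\psi_{2e}}(dx) = \frac{x^2}{1+x^2}e^{-|x|}dx$ read off from (43); this produces $V_{\tilde{\psi_{2e}}}(it) = -2it\int_0^{\infty} \frac{x^2}{t^2+x^2}e^{-x}\,dx$, which after the split $\frac{x^2}{t^2+x^2} = 1 - \frac{t^2}{t^2+x^2}$ reduces to the same Gradshteyn--Ryzhik-type integral already exploited in Corollary 6, confirming the same answer.
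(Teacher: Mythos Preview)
Your proof is correct and essentially identical to the paper's first proof: the paper also writes $V_{\tilde{\phi_{2e}}}(it)=2it\,\alpha(t)$ with $\alpha(t)=ci(t)\cos t+si(t)\sin t$, applies the differential relation from Theorem~2 to get $-2it^2\alpha'(t)$, and computes $\alpha'(t)$ exactly as you do. Your level-$Z$ sanity check is also one of the alternative proofs the paper records.
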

\emph{Proof.} (First proof.)  From Corollary 6,  $V_{\Tilde{\phi_{2e}}}(it)= 2 it\, \alpha(t)$ where $\alpha(t):=  ci(t)\cos t + si(t)\sin t$. Then  Theorem 2 gives  that 
\begin{multline*}
V_{\tilde{\psi_{2e}}}(it)= V_ {\tilde{\phi_{2e}}}(it ) - t \frac{d}{dt} [V_ {\tilde{\phi_{2e}}}(it)  ]   = 2 it \alpha (t)- t (2 i \alpha(t)+2it \alpha^{\prime}(t)) = - 2it^2 \alpha^{\prime}(t) \\ = -2it^2 (t^{-1} - ( (ci(t)\sin(t)-si(t)\cos(t)))=
2it[t( ci(t)\sin(t)-si(t)\cos(t))-1],
\end{multline*}
that completes the reasoning for the formula  (44).

(Second proof.) Here we will use the equality
$$
\int_0^\infty \frac{e^{-\xi x}}{\beta^2+x^2} dx= \frac{1}{\beta}[ci(\xi \beta)\sin (\xi\beta)-si(\xi\beta)\cos(\xi\beta)], \ \Re \xi >0, \ \Re \beta >0;
$$
cf. 3.354 (1) in [6 ] or  the identity used in the first proof of Corollary 6. Hence and  from Theorem 1 (level A) and (43)
\begin{multline*}
V_{\tilde{\psi_{2e}}}(it)= i t^2\int_0^\infty \log \psi_{2e}(s) e^{-t s}ds=  2 i t^2 \int_0^\infty [\frac{1}{1+s^2} - 1]e^{-st}ds \\ =
 2it^2 \Big(  [ci( t)\sin (t)-si(t)\cos(t)] - t^{-1} \Big) = 2it \ [ \ t \big( ci(t)\sin(t)- si(t)\cos(t) \big)-1\ ],
\end{multline*}
which coincides with (44).

(Third proof.) Now we use the level Z from Theorem 1 and the finite Khintchine measures 
$m_{\psi}(dx)= \frac{x^2}{1+x^2} e^{-|x|} dx$  from repersentation (43). Thus
\begin{multline*}
V_{\tilde{\psi_{2e}}}(it)= - i t\int_{\Rset} \frac{x^2}{t^2+x^2} e^{-|x|}dx=
 - i t \int_{\Rset} (1- \frac{t^2}{t^2+x^2}) e^{-|x|}dx 
\\  = 2 it \Big(t^2\int_0^\infty \frac{1}{t^2+x^2}e^{-x}dx -1\Big)= 2 it \big(   t^2 [\frac{1}{t}(\big( ci(t)\sin(t)- si(t)\cos(t) ]-1\big) \\ =2it \ [ \ t \big( ci(t)\sin(t)- si(t)\cos(t) \big)-1\ ],
\end{multline*}
that completes the last argument in the proof of Corollary 7.

\medskip
\textbf{Acknowledgment.} I would like to thank K.Topolski, K. Makaro and R. Suwalski  from University of Wroclaw, for their technical help in setting up diagrams, and in using  Wolfram and Mathematica  programs for computing definite integrals.

\bigskip
\bigskip
\noindent {\bf References}

[1] A. Araujo and E. Gine (1980), \emph{The central limit theorem for real and Banach valued random variables,} J. Wiley , New York.

[2] O. Barndorff-Nielsen and  S. Thorbjorsen (2006), Classical and free infinite divisibility
and L\'evy process; in \emph{Lect. Notes in Math.} \textbf{1866}, pp. 33-159.

[3] H. Bercovici and D. V. Voiculescu (1993), Free convolution of measures with unbounded support, \emph{Indiana Univ. Math. J.}, vol.42, 733-773.

[4] H. Bercovici  and V. Pata (1999), Stable laws and domains of attraction in free probability theory; \emph{ Annals of Math.}, vol.\textbf{149}, pp. 1023-1060.

[5] P. Bilingsley (1986), \emph{Probability and mesaure}, John Wiley $\&$ Sons, New York (Second Edition).

[6]  I. S. Gradshteyn, I. M. Ryzhik (1994), Table of integrals, series and products, $5^{th}$ Edition, Academic Press, New York.

[7] J. Jacod (1985), Grossissement de filtratrion et processeusd"Ornstein-Uhlenbek generalise.  In : \emph{ Grossissement de filtartion: examples et applications}; Springer, D.Julien and M. Yor  Eds. Lect.  Notes in Math. 1118, pp. 37-44.

[8]  L. Jankowski and Z. J.  Jurek (2012), Remarks on restricted Nevalinna transforms, \emph{Demonstratio Math.} vol. XLV,  no 2,  pp. 297-307.

[9]  Z. J.  Jurek (1996),  Series of independent exponential random variables.   
In:  \emph{Proceedings of the Seventh Japan-Russia Symposium 
Probab. Theor. and Math. Stat.}, Tokyo 26 - 30 July 1995; World Scientific; pp. 174-182.

[10]  Z. J. Jurek (1997),  Selfdecomposability: an exeption or a rule ?, \emph{Annales Universitatis Marie Curie-Sklodowska, Lublin-Polonia}, vol. 51 , Sectio A, pp. 93-107.

[11]  Z. J.  Jurek (2006), Cauchy transforms of measures as some functionals of Fourier transforms, \emph{Probab. Math. Stat.} vol. \textbf{26} , Fasc. 1, pp. 187-200.

[12] Z. J. Jurek (2007), Random integral representations for free-infinitely divisible and tempered stable distributions, \emph{ Stat. Probab. Letters} vol. 77, pp. 417-425.

[13]  Z. J . Jurek (2016), On a method of introducing  free-infinitely divisible probability measures, \emph{Demonstratio Math.} vol. 49, No 2, pp. 236-251.

[14] Z. J. Jurek and J. D. Mason (1993), \emph{Opertor-limit distributions in probability theory}, J. Wiley and Sons, New York.

[15] Z. J. Jurek and W. Vervaat (1983),  An integral represenation for selfdecopmposable  Banach space valued random variables, \emph{Z. Wahrscheinlichkeitstheorie verw. Gebiete}, vol. 62, pp. 247-262.

[16] Z. J. Jurek and M. Yor (2004), Selfdecomposable laws  associated with hyperbolic functions,  \emph{Probab. Math. Stat.}, vol. 24, Fasc. 1, pp. 181-191.

[17] M. Meerscheart, P. Scheffler (2001), \emph{Limit distributions for sums of independent random vectors},  Wiley Series in Probability and Statistics, John Wiley $\&$ Sons, New York.

[18] J. Pitman and M. Yor (2003), Infinitely divisible laws associated with hyperbolic functions, \emph{Cand. J. Math.} vol. 55 (2), pp. 292-330. 

[19] K. R. Parthasarathy (1967), \emph{Probability measures on metric spaces}, Academic Press, New York and London.

[20]  D. Voiculescu (1999),  Lectures on free probability. In: \emph{Lectures on probability theory and statistics}, Saint-Flour XXVIII - 1998;  pp. 279-349. Springer

\end{document}